\documentclass{eptcs}

\bibliographystyle{plainurl}

\title{Categorical semantics of a \\ simple differential programming language}

\author{Geoffrey Cruttwell
\institute{Mount Allison University, Sackville, Canada}
\email{gcruttwell@mta.ca}
\and
Jonathan Gallagher  \qquad\qquad   Dorette Pronk
\institute{Dalhousie University, Halifax, Canada}
\email{jonathan.gallagher@dal.ca \qquad\qquad  dorette.pronk@dal.ca}}


\usepackage{etex}

\usepackage{mathrsfs}

\usepackage{hyperref}

\usepackage{comment}

\usepackage[all,2cell,cmtip]{xy}
\UseTwocells

\usepackage{graphicx}
\usepackage[dvipsnames]{xcolor}
\usepackage{enumerate}
\usepackage{BarrTo}
\usepackage{amsfonts,amsmath,amssymb,amsthm}
\usepackage{proof}
\usepackage{tikz-cd}

\usepackage{tikz}
\usetikzlibrary{decorations.pathmorphing}
\tikzset{snake it/.style={decorate, decoration=snake}}
\usetikzlibrary{arrows,calc} 
\usepackage{tikzit}

\tikzstyle{red}=[fill={rgb,255: red,255; green,0; blue,4}, draw=black, shape=circle]
\tikzstyle{green}=[fill={rgb,255: red,25; green,255; blue,0}, draw=black, shape=circle]
\tikzstyle{square}=[fill=white, draw=black, shape=rectangle]
\tikzstyle{circle}=[fill=white, draw=black, shape=circle]


\usepackage[all]{xy}

\usepackage{microtype}

\usepackage{stmaryrd}


\newcommand{\ty}{\mathsf{Ty}}
\newcommand{\real}{\mathsf{real}}

\newcommand{\true}{\mathsf{true}}
\newcommand{\fd}{\mathsf{fd}}
\newcommand{\false}{\mathsf{false}}
\newcommand{\sdpl}{\mathsf{SDPL}}

\newcommand{\rd}{\mathsf{rd}}
\newcommand{\Rd}{\mathcal{R}}

\newcommand{\op}{\text{op}}
\newcommand{\total}{\mathsf{total}}
\newcommand{\fst}{\mathsf{fst}}
\newcommand{\snd}{\mathsf{snd}}

\newcommand{\abun}{\mathsf{ABun}}

\newcommand{\lin}{\mathsf{Lin}}

\newcommand{\R}{\mathsf{R}}

\newcommand{\ev}{\mathsf{ev}}
\newcommand{\x}{\times}

\newcommand{\blank}{\underline{~}}

\newcommand{\X}{\mathbb{X}}

\newcommand{\bev}{\mathsf{bev}}

\newcommand{\proves}{\vdash}
\newcommand{\<}{\left\langle}
\renewcommand{\>}{\right\rangle}

\newcommand{\smooth}{\mathsf{Smooth}}

\renewcommand{\lim}{\mathsf{lim}}

\newcommand{\ex}{\mathsf{ex}}

\newcommand{\bun}{\mathsf{bun}}

\newcommand{\C}{\underline{C}}

\newcommand{\fun}{\mathsf{Fun}}


\newcommand{\fv}{\mathsf{fv}}

\newcommand{\M}{\mathcal{M}}


\newcommand{\den}[1]{{\left\llbracket {#1} \right\rrbracket}}

\newcommand{\rs}[1]{\, \overline{ {#1} } \,}

\newcommand{\difft}[2]{\frac{\partial {#1}}{\partial {#2}}}





\newtheorem{theorem}{Theorem}[section]

\newtheorem{proposition}[theorem]{Proposition}
\newtheorem{corollary}[theorem]{Corollary}

\newtheorem{lemma}[theorem]{Lemma}

\newtheorem{definition}[theorem]{Definition}

\theoremstyle{definition}

\begin{document}

\maketitle 

\section{Introduction}

With the increased interest in machine learning, and deep learning in particular 
(where one extracts progressively higher level features from data using multiple layers of processing), 
the use of automatic differentiation has become more wide-spread in computation.
See for instance the 
surveys given in \cite{book:deep-learning} and \cite{journal:Baydin-diff-machine-learning}.  In fact, Facebook's Chief AI scientist Yann LeCun has gone as far as famously exclaiming:
\begin{quote}
  ``Deep learning est mort. Vive Differentiable Programming!  ...people are now building a new kind of 
  software by assembling networks of parameterized functional blocks and by training them from examples 
  using some form of gradient-based optimization.''\footnote{\href{https://www.facebook.com/yann.lecun/posts/10155003011462143}{Facebook post on Jan 5, 2018}}
\end{quote}
The point being that differentiation is no longer being viewed as merely a useful tool when creating software, but 
instead becoming viewed as a fundamental building block.  This sort of ubiquity warrants a more in-depth study of automatic 
differentiation with a focus on treating it as a fundamental component.

%

There have been two recent developments to provide the theoretical support for this type of structure.  In fact, the settings described above use two types of differentiation: the usual forward derivative to analyse the effect of changes in the data, as well as the reverse derivative to allow for error correction (i.e., training) through the efficient calculation of the gradients of functions.  Thus, any theoretical approach must be able to deal with both types of differentiation.    One approach is presented in \cite{journal:Abadi-Plotkin}, where Abadi and Plotkin provide a simple differential programming language with conditionals, recursive function definitions, and a notion of reverse-mode differentiation (from which forward differentiation can be derived) together with both a denotational and an operational semantics, and theorems showing that the two coincide.
Another approach is given in \cite{arxiv:RDC}, where the authors present reverse differential categories, a categorical setting for reverse differentiation.  They also show how every reverse differential category gives rise to a (forward) derivative and a canonical ``contextual linear dagger'' operation. The converse is true as well: a category with a foward derivative (that is, a Cartesian differential category \cite{journal:BCS:CDC}) with a contextual linear dagger has a canonical reverse derivative.

In the present paper we bring these two approaches together.  In particular, we show how an extension of 
reverse derivative categories models Abadi and Plotkin's language, and describe how this categorical model 
allows one to consider potential improvements to the operational semantics of the language.  
To model Abadi and Plotkin's language categorically, reverse derivative categories are not sufficient, 
due to their inability to handle partial functions and control structures.  
Thus, we need to add partiality to reverse differential categories.  The standard categorical machinery
to model partiality is restriction structure, which assigns to each map a partial 
identity map, subject to axioms as described in \cite{journal:rcats1}.  Combining 
this structure with reverse differential structure, 
we introduce {\em reverse differential restriction categories}. 
In addition to the list of axioms {\bf [RD.1 -- 7]} given for reverse differential categories 
in \cite{arxiv:RDC}, we require two additional axioms expressing how the restriction of the 
reverse derivative of a function is related to the restriction of the function itself and what the 
reverse derivative of a restriction of a function needs to be (cf.~Definition \ref{dfn:RDRC}).
The results characterizing the relationship between differential and reverse differential categories 
in terms of a contextual linear dagger extend to the context of restriction categories.  We also get 
for free that the reverse derivative preserves the order on the maps and preserves joins of maps, if they exist.

In Section \ref{sec:interpretation} we show how Abadi and Plotkin's language can be 
modelled in a  reverse differential restriction category. We do this in two steps: at first we modify 
their language by omitting general recursion and instead only include while-loops. 
While-loops can be modelled in terms of recursion, but by separating them out we can see 
that source-transformation techniques (not discussed explicitly in Abadi and Plotkin 
but used in some commercial systems such as Theano \cite{journal:theano-ref}, TensorFlow \cite{journal:tensorflow-ref},
 and Tangent \cite{journal:tangent-ref}) always hold in our semantics (see Section \ref{section:source-transformation}); source 
 transformation techniques are not used for general recursion.   We also note that in order to be 
 able to push 
differentiation through the control structure, Abadi and Plotkin need that for each predicate  
the inverse images of {\em true} and {\em false} are both open sets. In the context of restriction 
categories we model this instead by providing two partially defined maps into the terminal object $1$ 
for each predicate symbol (one for true and one for false) with the requirement that their restrictions 
do not overlap (cf.~Section \ref{section:cat-interp-sdpl}). 

In the process of modelling Abadi and Plotkin's language, we see that not all our axioms are needed. 
Specifically,  Axioms {\bf [RD.6]} and {\bf [RD.7]} of a reverse differential restriction category (which deal with the behaviour of repeated reverse derivatives)
are not strictly necessary to model Abadi and Plotkin's language.  
However, in the final section of the paper, we show that if these axioms are present, 
changes can be made to the operational semantics to improve the efficiency and applicability of the simple differential programming language.  

Abadi and Plotkin's language represents an approach that makes the reverse derivative a language primitive
in a functional language.  Other approaches have been proposed to use reverse-mode accumulation for computing the derivative in a functional language.  Given a function $\R^n \to^{f} \R^m$, Pearlmutter and Siskind 
\cite{journal:lambda-as-backprop} discuss how to compute the Jacobian matrix of $f$ in a functional 
language by performing transformations on the function's computational graph.  This idea is similar 
to the symbolic differentiation of trace or tape terms in Abadi and Plotkin's language.  Elliot \cite{elliott-AD-ICFP}
shows how to view this sort of reverse-mode accumulation using continuations: when a function, written as a 
composition of simple operations, is written in continuation passing style, the reverse derivative of its computation
graph corresponds to a sort of generalized derivative of the continuation.  
In \cite{journal:rev-diff-shift-reset}, Wang et al extend Pearlmutter and Siskind's work by showing that 
the move to continuations allows for getting around the \emph{nonlocality} issue in the earlier work. 
Brunel et al \cite{journal:diff-prog-linear-logic} extend Wang's work from the point of 
view of linear logic, and allow for additional analyses based on tracking the linearity of a variable.
Abadi and Plotkin's work represents a next step in this area by considering, in addition, control flow 
structures and general recursive functions.

This current work contrasts to work submitted to ACT 2019 on modelling differential programming using 
synthetic differential geometry (SDG) (see \cite{book:kock-sdg,book:Lavendhomme} for an introduction to 
SDG).  In the previous work a simple differential programming language 
featuring forward differentiation was introduced and an interpretation into a well-adapted model of 
SDG was given (see e.g. \cite{journal:cahiers-topos-dubuc} for such models).  The focus was on exploring 
what programming languages features might be able to exist soundly with differential programming.  The 
current work develops the categorical semantics of Abadi and Plotkin's language for reverse differentiation as well as the categorical 
semantics of source-transformations for their language.  In particular we show that the operational semantics 
is modelled soundly by a denotational semantics into our categorical framework.  We will also see that 
using the axiomatic approach developed here leads to a sound exponential speedup when computing the 
reverse derivative of looping-phenomena.

\section{Background: Relevant Categorical Structures}

In this section, we briefly review some of the relevant structures from category theory which we will make use of.

\subsection{Cartesian and reverse differential categories}

The canonical category for differentiation is the category $\smooth$ whose objects are the powers of the reals $\R$ ($\R^0 = \{1\},\R, \R^2, \R^3$, etc.) and whose maps are the smooth (infinitely differentiable) maps between them.  To any map $f: A \to B$ in this category, there is an associated map 
	\[ D[f]: A \times A \to B \]
whose value at $(x,v) \in A \times A$ is $J(f)(x)\cdot v$, the Jacobian of $f$ at $x$, taken in the direction $v$.  This map satisfies various rules; for example, the chain rule is equivalent to the statement that for any maps $f: A \to B, g: B \to C$,
	\[ D[fg] = \<\pi_0f,D[f]\>D[g]. \]
(Note that we use the path-order for composition, so $fg$ means ``first $f$ then $g$''.)
Many other familiar rules from calculus can be expressed via $D$; for example, the symmetry of mixed partial derivatives can be expressed as a condition on $D^2[f] = D[D[f]]$.  

\begin{definition} (\cite[Defn. 2.1.1]{journal:BCS:CDC}) A \textbf{Cartesian differential category} or \textbf{CDC} is a Cartesian left additive category (\cite[Defn. 1.3.1]{journal:BCS:CDC}) which has, for any map $f: A \to B$, a map 
	\[ D[f]: A \times A \to B \]
satisfying seven axioms $\textbf{[CD.1--7]}$.
\end{definition}
The formulation of CDCs and indeed the other flavours of categories with derivatives we will use have the intent that 
in $\<a,v\>D[f]$, $a$ is the point and $v$ is the direction; this is in contrast to the original formulation of CDCs 
which had the point and direction swapped, and we chose the point-direction formulation because most of the literature 
follows this convention.  This causes a change to axioms \textbf{CD.2,6,7}.

While $\smooth$ is the canonical example, there are many others, including examples from infinite dimensional vector spaces, synthetic differential geometry, algebraic geometry, differential lambda calculus, etc: see \cite{journal:BCS:CDC, thesis:myphdthesis,  journal:Diff-Bund,  journal:Cockett-Seely:Faa, journal:TangentCats}.  

In contrast, the reverse derivative, widely used in machine learning for its efficiency, is an operation which takes a smooth map $f: A \to B$ and produces a smooth map
	\[ R[f]: A \times B \to A \]
whose value at $(x,w) \in A \times B$ is $J(f)^{T}(x)\cdot w$, the transpose of the Jacobian of $f$ at $x$, taken in the direction $w$.  

There are two possible ways to categorically axiomatize the reverse derivative.  One way is to start with a CDC and ask for a dagger structure (representing the transpose); one could then use the dagger with the $D$ from the CDC to define a reverse derivative $R$.  However, there is some subtlety in this: the dagger structure is only present on the \emph{linear} maps of the category, not on all the maps of the category. The other way is to axiomatize $R$ directly, as was done in \cite{arxiv:RDC}.  

\begin{definition} (\cite[Defn. 13]{arxiv:RDC}) A \textbf{reverse differential category} or \textbf{RDC} is a Cartesian left additive category which has, for any map $f: A \to B$, a map
	\[ R[f]: A \times B \to A \]
satisfying seven axioms.
\end{definition}

For example, in this formulation the chain rule is equivalent to {\bf [RD.5]}, the rule that for any maps $f: A \to B$, $g: B \to C$,
	\[ R[fg] = \<\pi_0, (f\x 1)R[g]\>R[f]. \]
	
Moreover, there is something striking about a reverse differential structure: any RDC is automatically a CDC.  If one applies the reverse derivative twice, 0's out a component and projects, the result is the forward derivative.   That is, the following defines a (forward) differential structure from a reverse differential structure (see \cite[Theorem 16] {arxiv:RDC}):
  \[
    \infer{
      D[f]:= A\x A \to^{\<\pi_0,0,\pi_1\>} (A\x B)\x A \to^{R[R[f]]} A\x B \to^{\pi_1} B
    }{
      \infer{
        (A\x B) \x A \to^{R[R[f]]} A\x B 
      }{
        \infer{
          A\x B \to^{R[f]} A
        }{
          A \to^{f} B
        }
      }
    }  
  \]
  
Thus, while a ``dagger on linear maps'' is required to derive an RDC from a CDC, no such structure is required to go from an RDC to a CDC.  In fact, one can show that a CDC with a ``dagger on linear maps'' is equivalent to an RDC: see Theorem 42 in \cite{arxiv:RDC}.  

For this reason, as well as the fact that the reverse derivative is of greater importance in machine learning, in this paper we take a reverse differential category to be the primary structure.  

\subsection{Restriction categories and differential restriction categories}

Of course, to model a real-world programming language which involves non-terminating computations, we must also be able to handle partial functions.  For this, we turn to restriction categories \cite{journal:rcats1}, which allow one to algebraically model categories whose maps may only be partially defined.  Consider the category of sets and partial functions between them.  To any map $f: A \to B$ in this category, there is an associated ``partial identity'' map $\rs{f}: A \to A$, which is defined to be the identity wherever $f$ is defined, and undefined otherwise.  This operation then has various properties such as $\rs{f} f = f$.  This is then axiomatized:

\begin{definition} (\cite[Defn. 2.1.1]{journal:rcats1}) A \textbf{restriction category} is a category which has for any map $f: A \to B$, a map $\rs{f}: A \to A$ satisfying various axioms.
\end{definition}

In section \ref{sec:RDRC}, we will combine restriction structure with reverse differential structure to get the categorical structure we will use to model Abadi and Plotkin's language.  

Before we get to that, however, we will need to briefly review a few definitions from restriction category theory.  It will also be helpful to consider the previously defined combination of restriction structure and (forward) differential structure.

A restriction category allows one to easily talk about when a map is ``less than or equal to'' a parallel map and when two parallel maps are ``compatible'':

\begin{definition}
Suppose $f,g: A \to B$ are maps in a restriction category.  Write $f \leq g$ if $\rs{f} g = f$, and write $f \sim g$ (and say ``$f$ is compatible with $g$'') if $\rs{f}g = \rs{g}f$.  
\end{definition}
That is, $f \leq g$ if $g$ is defined wherever $f$ is defined, and when restricted to $f$'s domain of definition, $g$ is equal to $f$; $f \sim g$ if $f$ and $g$ are equal where they are both defined.  One can show that $\leq$ is a partial order on each hom-set; in fact,
restriction categories are canonically partial order enriched by $\leq$.  

Being able to ``join'' two compatible maps will be important when we define control structures such as ``if'' and ``while'', as we will being able to discuss when maps are ``disjoint''.  

\begin{definition}

\begin{enumerate}
\item  If $f,g: A \to B$ and there is a least upper bound $f\vee g$ with respect to the partial order defined above, we call $ f\vee g$ the \textbf{join} of $f$ and $g$. Note that this implies that $f$ and $g$ are compatible.
  \item The notion of join extends to families of maps that are pairwise compatible, and we write $\vee_i f_i$
to denote the join of the pairwise compatible family.
  \item Say that a map $\emptyset: A \to B$ is \textbf{nowhere defined} if $\emptyset$ is the minimum in the partial order.  
  \item Say that $f, g: A \to B$ are \textbf{disjoint} if $\rs{f}g$ is nowhere defined.  Any two disjoint maps are compatible. 
\end{enumerate}
\end{definition}

The formalization of disjoint joins in a restriction category was given in \cite{journal:cockett-manes-boolean-classical}
as part of the story of formalizing Hoare semantics in a \emph{classical} restriction category.    Further analysis of joins in restriction categories was provided in \cite{journal:guo-range-join}.
Giles \cite{phd:giles-calgary} used disjoint joins in connecting restriction categories to the semantics of 
reversible computing.  Disjoint 
joins in partial map categories correspond to disjoint joins of monics, which often give a coproduct (e.g. as 
in coherent categories).  One way to model iteration is to have a traced coproduct, and this can be 
directly expressed using disjoint joins: this approach was used in formalizing iteration in restriction categories and to 
build a partial combinatory algebra by iterating a step-function in \cite{journal:timed-sets,journal:total-maps}.
The formalization of iteration using disjoint joins was based on the work of Conway \cite{book:conway-iteration}.
Another approach to formalizing the semantics of iterative processes in a category using algebraic formalizations was 
introduced in \cite{chapter:iteration-theory}, refined in \cite{book:Bloom-Esik}, and 
further developed categorically in \cite{journal:elgot-algebra}.

Finally, it is worth noting that there has been previous work combining CDC structure with restriction structure \cite{journal:diff-rest}.  The canonical example of such a category is the category of smooth \emph{partial} maps between the $\R^n$'s.  The partiality acts in a compatible way with the derivative, as $D[f]: A \times A \to B$ is entirely defined in the second (vector) component: that is, the only partiality $D[f]$ has is from $f$ itself.  Thus, in a ``differential restriction category'', one asks that $\rs{D[f]} = \rs{f} \times 1$.  
These are formulated on top of the notion of \emph{cartesian left additive restriction category}: these are 
restriction categories with restriction products (which is a lax notion of product for 
restriction catgories developed in \cite{cockett_lack_2007}) and where each homset is a commutative 
monoid such that $x (f+g) = xf + xg$ and $0f \leq 0$, and finally projections fully preserve addition.
The intuition comes from considering partial, smooth functions on open subsets of $\R^n$: not all 
smooth functions preserve addition, but smooth functions are addable under pointwise addition.

\begin{definition} (\cite[Defn. 3.18]{journal:diff-rest}) A \textbf{differential restriction category} is a Cartesian left additive restriction category, which has, for each map $f: A \to B$, a map 
	\[ D[f]: A \times A \to B \]
satisfying various axioms \footnote{There are nine equational axioms mirroring the axioms for reverse differential restriction categories given in the sequel.}, including \textbf{[DR.8]}: $\rs{D[f]} = \rs{f} \times 1$.
\end{definition}

\section{Reverse differential restriction categories}\label{sec:RDRC}

We are now ready to define the new structure which we will use to model Abadi and Plotkin's language.  

\begin{definition}\label{dfn:RDRC}
  A {\bf reverse differential restriction category} or \textbf{RDRC} is a Cartesian left additive restriction category which has an operation on maps:
  \[
    \infer{A\x B \to_{R[f]} A}{A\to^{f} B}  
  \]
  such that
  \begin{enumerate}[{\bf [RD.1]}]
    \item $R[f+g] = R[f] + R[g]$ and $R[0] = 0$;
    \item for all $a,b,c$: $\<a,b+c\>R[f] = \<a,b\>R[f] + \<a,c\>R[f]$ and $\<a,0\>R[f] = \rs{af}0$;
    \item $R[\pi_j] = \pi_1 \iota_j$;
    \item $R[\<f,g\>] = (1\x \pi_0)R[f] + (1\x \pi_1)R[g]$;
    \item $R[fg] = \<\pi_0,\<\pi_0f,\pi_1\>R[g]\>R[f]$;
    \item $\<1\x\pi_0,0\x \pi_1\>(\iota_0\x 1)R[R[R[f]]]\pi_1 = (1\x \pi_1)R[f]$;
    \item $(\iota_0\x 1)R[R[(\iota_0 \x 1)R[R[f]]\pi_1]]\pi_1 = \ex (\iota_0\x 1)R[R[(\iota_0 \x 1)R[R[f]]\pi_1]]\pi_1$;
    \item $\rs{R[f]} = \rs{f} \x 1$;
    \item $R[\rs{f}] = (\rs{f} \x 1) \pi_1$.
  \end{enumerate}
\end{definition}

As noted above, \textbf{[RD.5]} represents the chain rule, while \textbf{[RD.8]} says that the partiality of $R[f]$ is entirely determined by the partiality of $f$ itself.  \textbf{[RD.9]} says how to differentiate restriction idempotents.  The other axioms are similar to those for an RDC; for an explanation of what they represent, see the discussion after Definition 13 in \cite{arxiv:RDC}. 
Also note that term logics have also been given to simplify reasoning in cartesian differential categories \cite{journal:BCS:CDC} and 
differential restriction categories \cite{msc:gallagher-calgary}; a term logic for reverse differential restriction categories exists 
but will not be discussed further here.

Any Fermat theory \cite{paper:On-1-Form-Classifiers} and more generally any Lawvere theory which is also 
a cartesian differential category can be given the structure of a reverse differential category; in these 
cases both the forward and reverse derivatives can be pushed down to sums and tuples of derivatives on 
maps $R \to R$, and here the forward and reverse derivative necessarily coincide.  A restriction version of this 
example is given by considering a topological ring $R$ that satisfies the axiom of determinacy 
(see \cite{journal:bertram-calc-arbring}); the category with objects: powers of $R$, and $C^\infty$-maps that are smooth 
on restriction to an open set form a reverse differential restriction category.  This meta-example includes 
the category $\smooth_P$ of functions that are smooth on an open subset of $\R^n$.  For an example whose 
objects are not of the form $R^n$: the coKleisli category of the multiset comonad on the category of 
relations $\mathsf{Rel}$ is a cartesian differential category and the category of linear maps 
is $\mathsf{Rel}$ (see \cite{DiffCats,journal:BCS:CDC} for details).
As $\mathsf{Rel}$ is a compact closed self-dual category,
and the derivative at a point is linear (hence a map in $\mathsf{Rel}$), one can obtain a reverse derivative on the coKleisli category of the 
finite multiset comonad on $\mathsf{Rel}$.

Just as with an RDC, we  can derive a forward differential restriction structure from a reverse.

\begin{theorem}\label{theorem:reverse-are-forward}
  Every reverse differential restriction category $\X$ is a differential restriction category 
  with the derivative defined as previously (see also \cite[Theorem 16] {arxiv:RDC}).  
\end{theorem}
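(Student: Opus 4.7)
The plan is to verify the nine axioms of a differential restriction structure for the operation $D[f] := (\iota_0 \x 1) R[R[f]] \pi_1 : A \x A \to B$. The underlying Cartesian left additive restriction category structure of $\X$ serves directly as the corresponding structure for the putative DRC, so only the properties of $D$ itself need to be checked.

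For \textbf{[DR.1]}--\textbf{[DR.7]} (the restriction-aware analogues of \textbf{[CD.1]}--\textbf{[CD.7]}), I would follow the argument of Theorem 16 of \cite{arxiv:RDC} \emph{mutatis mutandis}. That proof, which shows any RDC gives a CDC, uses only \textbf{[RD.1]}--\textbf{[RD.7]} together with the Cartesian left additive structure, via purely syntactic manipulations. Because the RDRC axioms \textbf{[RD.1]}--\textbf{[RD.7]} already carry their restriction twists (for instance the $\rs{af}\,0$ in \textbf{[RD.2]}), those twists propagate through the same equational chain to yield the corresponding twisted forward axioms \textbf{[DR.1]}--\textbf{[DR.7]}. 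The most delicate step is \textbf{[DR.5]} (the forward chain rule), which requires expanding $R[R[fg]]$ by two applications of \textbf{[RD.5]} and then rearranging via \textbf{[RD.6]} and \textbf{[RD.7]} to collapse the nested reverse derivatives down to the forward chain rule.

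For \textbf{[DR.8]} I would compute $\rs{D[f]}$ directly. Totality of $\pi_1$ gives $\rs{D[f]} = \rs{(\iota_0 \x 1) R[R[f]]}$. Applying \textbf{[RD.8]} twice yields $\rs{R[R[f]]} = (\rs{f} \x 1_B) \x 1_A$, so the composite $(\iota_0 \x 1) \cdot ((\rs{f} \x 1_B) \x 1_A)$ unfolds to the pairing $\<\<\pi_0 \rs{f}, 0\>, \pi_1\>$. Its restriction --- computed via $\rs{\<h,k\>} = \rs{h}\,\rs{k}$ in a restriction product together with the identity $\rs{\pi_0 \rs{f}} = \rs{f} \x 1_A$ --- collapses to $\rs{f} \x 1$, as required.

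For \textbf{[DR.9]} (expected in the form $D[\rs{f}] = (\rs{f} \x 1)\pi_1$, mirroring \textbf{[RD.9]}), the plan is to unfold $R[R[\rs{f}]]$ by first using \textbf{[RD.9]} to replace $R[\rs{f}]$ by $(\rs{f} \x 1)\pi_1$, then applying $R$ a second time via the chain rule \textbf{[RD.5]} together with the projection case \textbf{[RD.3]} and another use of \textbf{[RD.9]} on the restriction idempotent factor. Pre-composing with $(\iota_0 \x 1)$ and post-composing with $\pi_1$ should then simplify, by the Cartesian axioms and the idempotence of $\rs{f}$, to $(\rs{f} \x 1)\pi_1$. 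The main obstacle I anticipate is \textbf{[DR.5]}: extracting the forward chain rule from a doubly nested reverse chain rule requires careful bookkeeping with \textbf{[RD.6]} and \textbf{[RD.7]}, and the restriction twists inherited from \textbf{[RD.2]} must be tracked so that the final identity matches the restriction-aware \textbf{[DR.5]} rather than its total counterpart \textbf{[CD.5]}.
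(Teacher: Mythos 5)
Your proposal is correct and follows essentially the same route as the paper's own (unpublished) proof: define $D[f] := (\iota_0 \x 1)R[R[f]]\pi_1$, inherit the analogues of \textbf{[CD.1--7]} from the total-case argument of Theorem 16 of \cite{arxiv:RDC} while tracking the restriction twists (notably the $\rs{af}\,0$ case of \textbf{[RD.2]}, which the paper verifies by a short explicit computation), and check \textbf{[DR.8]} and \textbf{[DR.9]} by direct calculation from \textbf{[RD.8]} and \textbf{[RD.9]}. Your \textbf{[DR.8]} computation matches the paper's almost line for line, and your plan for \textbf{[DR.9]} (unfold the inner $R[\rs{f}]$ via \textbf{[RD.9]}, then differentiate the resulting composite via \textbf{[RD.5]} and \textbf{[RD.3]}) is just an inlined version of the auxiliary lemmas the paper invokes for the same step.
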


Moreover, just as in \cite[Theorem 42]{arxiv:RDC}, one can prove that a DRC with a ``contextual linear dagger'' is equivalent to an RDRC; however, for space constraints we will not go into full details here.  
One must first describe fibrations for restriction categories: these were studied by Nester in \cite{msc:nester-calgary}.  One can give 
a version of the simple fibration for a restriction category as well as the dual of the simple fibration (this is 
remarkable -- as the dual of a restriction category is not generally a restriction category).  Importantly, maps in the simple fibration 
have their partiality concentrated in the context i.e. $\rs{f} = e \x 1$ where $e=\rs{e}$.   A contextual dagger is a 
an involution of fibrations $\lin(\X)[\X] \to \lin(\X)[\X]^*$ where $\lin(\X)[\X]$ denotes a subfibration of the simple fibration consisting of linear maps in context, using the notion of fibration for restriction categories.
From a reverse differential restriction category one obtains such an involution of fibrations from $(u,f) \mapsto (u,(\iota_0 \x 1)R[f]\pi_1)$,
and the second component is sometimes written $f^{\dagger[I]}$ where $I$ is the context object.  There are a few 
subtleties that we will also not go further into here.

The reverse derivative automatically preserves the induced partial order (from the restriction structure) and joins, if they exist:

\begin{proposition}\label{proposition:rev-diff-join-pres}
If $\X$ is a reverse differential restriction category, then for any $f, g: A \to B$, $f \leq g$ implies $R[f] \leq R[g]$, and if $\X$ has joins, then for any pairwise 
  compatible family $\{f_i\}$, $R[\vee_i f_i] = \vee_i R[f_i]$.
\end{proposition}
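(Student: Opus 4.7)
My plan is to handle monotonicity first, using the chain rule \textbf{[RD.5]} together with \textbf{[RD.8]} and \textbf{[RD.9]}, and then to deduce join preservation as a direct consequence of the formula relating $R[f]$ to $R[g]$ in the case $f\leq g$.

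For monotonicity, I would start from the hypothesis $f\leq g$, i.e.\ $\rs{f}\,g = f$, and apply $R$ to both sides. The reverse chain rule \textbf{[RD.5]} gives
\[ R[\rs{f}\,g] \;=\; \<\pi_0,\; \<\pi_0\rs{f},\pi_1\>\,R[g]\>\,R[\rs{f}]. \]
Substituting $R[\rs{f}] = (\rs{f}\x 1)\pi_1$ from \textbf{[RD.9]} and using the two restriction-category identities $\<u,v\>\,\pi_1 = \rs{u}\,v$ and $\rs{\pi_0\rs{f}} = \rs{f}\x 1$ should collapse this to $(\rs{f}\x 1)(\rs{f}\x 1)\,R[g]$. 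By idempotence of the restriction idempotent $\rs{f}\x 1$ this simplifies to $(\rs{f}\x 1)\,R[g]$. Since \textbf{[RD.8]} says $\rs{R[f]} = \rs{f}\x 1$, I can conclude $R[f] = \rs{R[f]}\,R[g]$, which is exactly $R[f]\leq R[g]$.

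For join preservation, assume $\X$ has joins and let $g := \vee_i f_i$ for a pairwise compatible family $\{f_i\}$. Monotonicity already gives $R[f_i]\leq R[g]$ for each $i$, so $R[g]$ is an upper bound of $\{R[f_i]\}$; the remaining task is to show it is the \emph{least} upper bound. I would do this via the formula derived above, $R[f_i] = (\rs{f_i}\x 1)\,R[g]$. Since joins in a join restriction category distribute over composition, over the restriction operator (so $\vee_i \rs{f_i} = \rs{\vee_i f_i} = \rs{g}$), and over pairing (hence over product in a single variable), I compute
\[ \vee_i R[f_i] \;=\; \vee_i (\rs{f_i}\x 1)\,R[g] \;=\; \bigl((\vee_i \rs{f_i})\x 1\bigr)\,R[g] \;=\; (\rs{g}\x 1)\,R[g] \;=\; \rs{R[g]}\,R[g] \;=\; R[g]. \]

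The real work lies in the monotonicity computation, and its main subtlety will be recognizing that $\rs{\pi_0 \rs{f}}$ is exactly the restriction idempotent $\rs{f}\x 1$ supplied by \textbf{[RD.8]}, which then absorbs the leftover pairing produced by the chain rule; once that identification is in hand, the rest of both parts should reduce to routine bookkeeping with the restriction axioms and the standard distributivity properties of joins in a join restriction category.
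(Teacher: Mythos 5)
Your proof is correct and takes essentially the same route as the paper's: the monotonicity computation via \textbf{[RD.5]}, \textbf{[RD.9]}, the identity $\rs{\pi_0\rs{f}} = \rs{f}\x 1$, and then \textbf{[RD.8]} matches the paper's argument step for step. Your join-preservation chain is the paper's own computation $R[\vee_i f_i] = (\rs{\vee_i f_i}\x 1)R[\vee_j f_j] = \vee_i R[\rs{f_i}\vee_j f_j] = \vee_i R[f_i]$ read in reverse, resting on the same key identity $R[\rs{f_i}\,g] = (\rs{f_i}\x 1)R[g]$ together with distributivity of joins over composition and products.
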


As we shall see, we will not strictly need {\bf[RD.6]} and {\bf[RD.7]} to model Abadi and Plotkin's language; thus, we make the following definition:
 
\begin{definition}
A \textbf{basic reverse differential restriction category} (or \textbf{basic RDRC}) is a structure satisfying all the requirements for an RDRC except {\bf[RD.6]} and {\bf[RD.7]}.
\end{definition}

However, as we discuss in the final section, using axioms {\bf[RD.6]} and {\bf[RD.7]} allows one to consider improvements to the operational semantics of the language.

\section{Interpretation of a simple differential language}\label{sec:interpretation}

We will make use of the language defined by Abadi and Plotkin \cite{journal:Abadi-Plotkin}.
We will make one modification up front.  We will first consider the language without recursive function 
definitions and instead with while-loops (called SDPL); after showing the semantics works out, we will then add 
recursive definitions back in (called SDPL$^+$).  We remark that while the presentation of $\sdpl$ given in 
Plotkin and followed here is parametrized over a single generating type; however, we can add arbitrary 
generating types as long as those types have operations that provide the structure of a commutative monoid.

In \cite{journal:Abadi-Plotkin}, Abadi and Plotkin remarked that there are two approaches to 
differentiating over control structures: there are source transformations used in systems such as 
TensorFlow \cite{journal:tensorflow-ref} and Theano \cite{journal:theano-ref} and there is the execution trace method used in systems 
such as Autograd \cite{journal:autograd-ref} and PyTorch \cite{journal:pytorch-ref}.  The source transformation method for dealing with 
derivatives of control structures defines a way to distribute the derivative into control structures; for example 
  \[\difft{\,\texttt{if} \, B \, \texttt{then} \, M \, \texttt{else} \, N}{x}\]
would be replaced by 
  \[\texttt{if} \, B \, \texttt{then} \, \difft{m}{x} \, \texttt{else} \, \difft{n}{x}\]

The execution trace allows defining a symbolic derivative 
on simpler terms with no control structures or derivatives, and then evaluating 
a term enough so that there are no control structures or derivatives present, allowing a symbolic 
trace through the derivative.  This must be done at runtime -- for example, we need to know when differentiating
over an \text{if-then-else} statement which branch was taken, and once this control structure is eliminated 
the derivative can be computed on the simpler resultant term.  This has the advantage of making it simpler to adapt to 
derivatives over more subtle structures such as recursive function definitions.  Since it's done at runtime,
it can performed by a source-transformation by a JIT compiler ensuring efficiency.

\subsection{The core language SDPL}
The types of $\sdpl$ are given by the following grammar:
  \[
    \ty  := \real \, | \, 1 \, | \, \ty \x \ty
  \]
Powers are assumed to be left-associated so $\real^{n+1} := \real^{n} \x \real$.  
To form the raw terms of $\sdpl$ we assume a countable supply of variables, a set of 
typed operation symbols $\Sigma$, and a set of typed predicate symbols $\mathsf{Pred}$.
The raw terms are then defined by the following grammar:
  \begin{align*}
    m := 
    & \, x \, 
    | \, r \ (r \in \R) \,
    | \, m + m \,
    | \, \op(m) \ (\op \in \Sigma) \,
    | \, \texttt{let} \, x : \ty = m \, \texttt{in}\, m  \\
    & | \,  * \, 
    | \, (m,n)_{\ty,\ty} \, 
    | \, \fst_{\ty,\ty}(m) \,
    | \, \snd_{\ty,\ty}(m) \,
    | \, \texttt{if} \, b \, \texttt{then} \, m \, \texttt{else} \, n  \\
    & | \, \texttt{while} \, b \, \texttt{do} \, m \, 
    | \,m.\rd(x:T.m)(m) \\ ~\\
    b := & \mathsf{pred}(m) \ (\mathsf{pred} \in \mathsf{Pred}) \, | \, \true \, | \, \false
  \end{align*}

Note that the typing rules will disallow inputs or outputs to come from boolean terms.  This is to ensure that 
all typed terms are differentiable with respect to every argument.  The typing rules for $\sdpl$ are
given in Table \ref{table:sdpl-typing}.
\begin{table} 
  \begin{center}
  \fbox{ 
    \begin{tabular}{lll}
      \infer{\Gamma,x:A \proves x: A}{}
        & \infer{\Gamma \proves r : \real}{r \in \R} 
        & \infer{\Gamma \proves m + n : \real}{\Gamma \proves m : \real & \Gamma \proves n : \real}\\~\\
      \infer{\Gamma \proves \op(m) : U}{\Gamma \proves m : T & \op : T \to U \in \Sigma}
        & \multicolumn{2}{c}{
          \infer{\Gamma \proves \texttt{let}\, x:T = m \, \texttt{in} \, n : U}{\Gamma \proves m : T & \Gamma,x:T \proves n : U}
        }\\~\\
      \infer{\Gamma \proves * : 1}{}
        \quad \infer{\Gamma \proves (m,n)_{U,T}: U \x T}{\Gamma \proves m : U & \Gamma \proves n : T}
        & \infer{\Gamma \proves \fst_{U,T}(m):U}{\Gamma \proves m: U\x T}
        & \infer{\Gamma \proves \snd_{U,T}(m):T}{\Gamma \proves m: U\x T}\\~\\
      \infer{\Gamma \proves \texttt{if} \, b \, \texttt{then} \, m \, \texttt{else}\, n : T}{\Gamma \proves b & \Gamma \proves m : T & \Gamma \proves n : T}
        & \multicolumn{2}{c}{
            \infer{p:U \proves \texttt{while} \, b \, \texttt{do} \, f : U}{p:U \proves b & p:U \proves f:U}
          }\\~\\
        \multicolumn{3}{c}{
          \infer{\Gamma \proves v.\rd(x:U.m)(a):U}{\Gamma,x:U \proves m : T & \Gamma \proves a : U & \Gamma \proves v:T}
        }\\~\\
      \infer{\Gamma\proves \true}{}
        & \infer{\Gamma \proves \false}{}
        & \infer{\Gamma \proves \mathsf{pred}(m)}{\Gamma \proves m : U & \mathsf{pred} : U \in \mathsf{Pred}}
    \end{tabular}
  }
  \end{center}
  \caption{Typing rules for $\sdpl$}
  \label{table:sdpl-typing}
\end{table}
In the typing rules, $\Gamma$ is assumed to be a list of typed variables $\Gamma = [x_i : A_i]_{i=1}^n$ where $A_i \in \ty$.  
Free variables are defined in the usual way; note that $\texttt{let}$ expressions bind the variable $x$ and 
when forming the reverse differential term $v.\rd(x:U.m)(a)$ the variable $x$ is also bound.  The reverse differential 
expression may be read as ``the reverse differential of $m$ with respect to $x$ evaluated at the point $a$ in the 
direction $v$.''

\subsection{Categorical interpretation of $\sdpl$}\label{section:cat-interp-sdpl}

Let $\X$ be a basic reverse differential restriction category with countable joins of disjoint maps.  
An \textbf{interpretation structure} for $\sdpl$ into $\X$ is given by a tuple of structures:
\[
  (A \in \X_0,(1\to^{a_r}A)_{r\in \R},\den{\blank},\den{\blank}_T,\den{\blank}_F)  
\]
and we extend such a structure to an interpretation  of all the terms of $\sdpl$ as explained below.
We must first interpret types, and to begin we need an object $A$ from $\X$ to carry our signatures.  
We also require that $A$ has a point $1 \to^{a_r}A$ for each element $r\in \R$ (since we must interpret $\R$ 
constants which are part of $\sdpl$) \footnote{It is not strictly necessary that $\sdpl$ contains a constant 
for every $r\in \R$ -- as long as we include $0$
 we could only require constants that we actually use, such as the computable reals.}.  
With such an $A$ we define an interpretation of types:
\[
  \den{1} := 1 \qquad \den{\real} := A \qquad \den{T\x U} := \den{T} \x \den{U}  
\]
We extend the interpretation to contexts:
\[
 \den{\cdot} := 1\qquad \den{x:U} := \den{U} \qquad   \den{\Gamma,x:U} := \den{\Gamma} \x \den{U}
\]
We also require an interpretation of each operation symbol $\op : T \to U \in \Sigma$ of the correct type:
$\den{\op} : \den{T} \to \den{U}$.  We additionally require two interpretations of each predicate symbol 
$\mathsf{pred} : U \in \mathsf{Pred}$: $\den{\mathsf{pred}}_T : \den{U} \to 1$ and $\den{\mathsf{pred}}_F : \den{U} \to 1$
such that $\rs{\den{\mathsf{pred}}_T} \rs{\den{\mathsf{pred}}_F} = \emptyset$.  To summarize: 
\[
  \Sigma(U,T) \to^{\den{\blank}} \X(\den{U},\den{T}) \qquad 
  \mathsf{Pred}(U) \to^{\den{\blank}_T,\den{\blank}_F} \X(\den{U},1)
\]

The intent for giving two interpretations of predicate symbols is that we must give an interpretation 
of the ``true'' part of the predicate and the ``false'' part.  In \cite{journal:Abadi-Plotkin}  an interpretation of predicate symbols is given as maps $\den{U} \to \{\true,\false\}$ with the property 
that the preimages of both $\true$ and $\false$ are open.  This necessarily makes the interpretation of 
a predicate partial or trivial, and moreover it is equivalent to giving an interpretation 
of predicate symbols into disjoint open sets of $\den{U}$, which is again equivalent to giving 
an interpretation into disjoint predicates on $\den{U}$.  A way around this non-standard 
interpretation of predicates is given by taking the 
manifold completion \cite{journal:grandis-manifolds,journal:TangentCats} of the model, noting 
that $1+1$ is a manifold, and then requiring that we map into $1+1$ by an atlas morphism, which will necessarily
yield two disjoint restriction idempotents on the domain.  Another approach is to use the Heyting negation of the 
associated restriction idempotent, noting that this will always be disjoint from the starting map.  
These two approaches have interesting relationships with the approach we take, but their full development 
wil not be pursued in this work.

We then extend the interpretation to all terms inductively.  Most of these interpretations are standard; the more novel parts are the interpretations of if, while, and reverse derivatives.  
\begin{description}
  \item[Proj:]
    ~
    \begin{itemize}
      \item $\den{x:U \proves x:U} := 1_{\den{U}}$;
      \item $\den{\Gamma,x:U \proves x:U} := \den{\gamma} \x \den{U} \to^{\pi_1} \den{U}$;
      \item $\den{\Gamma,y:U \proves x:T} := \den{\gamma} \x \den{U} \to^{\pi_0} \den{\gamma} \to^{\den{\Gamma \proves x:T}} \den{T}$.
    \end{itemize} 
  \item[Real operations:] ~
    \begin{itemize}
      \item We define $\den{\Gamma \proves 0:\real} := \den{\Gamma} \to^{0} A$ and for the other elements
      $
        \den{\Gamma \proves r : \real} := \den{\Gamma} \to^{!} 1 \to^{a_r = \den{r}} A = \den{\real}
      $
      \item 
      $
        \den{\Gamma \proves m+n : \real} := \den{\Gamma} \to^{\den{\Gamma \proves m:\real} + \den{\Gamma\proves n : \real}} \den{\real}
      $
    \end{itemize}
  \item[Operation terms:] Given $\op : T \to U \in \Sigma$ 
    \[
      \den{\Gamma \proves \op(m):U} := \den{\Gamma} \to^{\den{\Gamma\proves m: T}} \den{T} \to^{\den{\op}} \den{U}
    \]
  \item[Let:] 
    \[\den{\Gamma \proves \texttt{let} \, x:T = m \, \texttt{in}\, n : U}
        := \den{\Gamma} \to^{\<1,\den{\Gamma\proves m:T}\>} \den{\Gamma} \x \den{T} \to^{\den{\Gamma,x:T \proves n :U}} \den{U}\]
  \item[Product terms:]~
    \begin{itemize}
      \item $\den{\Gamma \proves * : 1} := \den{\Gamma} \to^{!} 1$
      \item $\den{\Gamma \proves (m,n)_{A,B} : A\x B} := \den{\Gamma} \to^{\<\den{\Gamma\proves m:A},\den{\Gamma\proves n:B}\>} \den{A}\x \den{B}$
      \item $\den{\Gamma \proves \fst(m)_{A,B}:A} := \den{\Gamma} \to^{\den{\Gamma\proves m:A\x B}} \den{A} \x \den{B} \to^{\pi_0}\den{A}$
      \item $\den{\Gamma \proves \snd(m)_{A,B}:B} := \den{\Gamma} \to^{\den{\Gamma\proves m:A\x B}} \den{A} \x \den{B} \to^{\pi_1}\den{B}$
    \end{itemize}
  \item[Control structures:] ~
      \[\den{\Gamma \proves \texttt{if}\, b\, \texttt{then}\, m\, \texttt{else}\, n :U}
        := \rs{\den{\Gamma\proves b}_T}\den{\Gamma \proves m: U} \vee \rs{\den{\Gamma\proves b}_F}\den{\Gamma\proves n :U}\]
      \[
        \den{p:A \proves \texttt{while}\, b\, \texttt{do}\, m :A}
        := \bigvee_{i=0}^{\infty}\left( \left(\rs{\den{p:A\proves b}_T}\den{p:A\proves m:A}\right)^i \rs{\den{q:A\proves b}_F} \right)
      \]
  \item[Reverse derivatives:] ~
    \begin{align*}
      & \den{\Gamma \proves v.\rd(x:T.m)(a):T} \\ &:=
        \den{\Gamma} \to^{\<\<1,\den{\Gamma\proves a:T}\>,\den{\Gamma\proves v:U}\>}
        (\den{\Gamma} \x \den{T})\x \den{U}
          \to^{R[\den{\Gamma,x:T\proves m:U}]\pi_1}\den{T}
    \end{align*}
  \item[Boolean terms:] $\den{\Gamma \proves \true}_T := \den{\Gamma} \to^{!} 1$ and $\den{\Gamma\proves \true}_F := \emptyset$.  
                        Likewise, $\den{\Gamma\proves \false}_T := \emptyset$ and \\ $\den{\Gamma \proves \false}_F := !$.    Finally 
                        for any $\mathsf{pred}\in \mathsf{Pred}(A)$:
    \[
      \den{\Gamma \proves \mathsf{pred}(m)}_H := \den{\Gamma} \to^{\den{m}}\den{A} \to^{\den{\mathsf{pred}}_H} 1
    \] 
    where $H$ ranges over $\{T,F\}$.
\end{description}

For a brief explanation of the interpretation of while-loops, for $f : A \to A$ we set $f^0 = \mathsf{id}$ and 
$f^{n+1} = ff^n$.  Then our interpretation says either the guard was false, or it was true and we executed $m$ and
then it was false, or it was true and we executed $m$ and it was still true and we executed $m$ again and then it was 
false, and so on.  This yields
  \[
    \den{\texttt{while}\, b \, \texttt{do}\, m} 
    := \den{b}_F \vee \den{b}_T \den{m}\den{b}_F \vee \den{b}_T \den{m} \den{b}_T \den{m}_T\den{b}_F \vee \cdots
  \]

\subsection{Categorical semantics of source code transformations}\label{section:source-transformation}
In this section we show that the interpretation above always soundly models source code transformations 
for differentiating if-then-else statements and while-loops.

\begin{proposition}
  In an interpretation structure on a basic RDRC, 
  for any terms $\Gamma,x:U \proves m:T$, $\Gamma,x:U\proves n:T$, $\Gamma\proves a:U$, and $\Gamma\proves v:T$ and 
  for any predicate $\Gamma,x:U \proves B$ we have 
  \begin{align*}
    &\den{\Gamma\proves v.\rd(x:U. \texttt{if}\, b \, \texttt{then}\, m \, \texttt{else} \, n  )(a)}  \\
    &=
    \den{\Gamma \proves \texttt{if} \, (\texttt{let}\, x = a\, \texttt{in}\, b) \, \texttt{then}\, v.\rd(x:U.m)(a)\, \texttt{else} \, v.\rd(x:U.n)(a)}
  \end{align*}
\end{proposition}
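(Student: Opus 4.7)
My plan is to unfold both sides of the claimed equation using the definitions in Section~\ref{section:cat-interp-sdpl}, distribute $R$ over the join that appears in the interpretation of \texttt{if}, and then rearrange using restriction-category identities. Concretely, writing $f = \den{\Gamma,x:U\proves m:T}$, $g = \den{\Gamma,x:U\proves n:T}$, and $\beta_H = \den{\Gamma,x:U\proves b}_H$ for $H\in\{T,F\}$, the interpretation gives $\den{\texttt{if}\,b\,\texttt{then}\,m\,\texttt{else}\,n} = \rs{\beta_T}f \vee \rs{\beta_F}g$, where the two summands are disjoint (and hence compatible) by the hypothesis $\rs{\beta_T}\rs{\beta_F}=\emptyset$ on predicate interpretations. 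The LHS of the equation is then $\langle\langle 1,\den{a}\rangle,\den{v}\rangle\,R[\rs{\beta_T}f \vee \rs{\beta_F}g]\,\pi_1$.

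The first step is to push $R$ through the join via Proposition~\ref{proposition:rev-diff-join-pres}, obtaining $R[\rs{\beta_T}f] \vee R[\rs{\beta_F}g]$. The second step is the key auxiliary identity: for any restriction idempotent $e = \rs{e}$ and any $h$, $R[eh] = (e\x 1)R[h]$. This follows from \textbf{[RD.5]} applied to $e$ and $h$, combined with \textbf{[RD.9]} which gives $R[e] = (e\x 1)\pi_1$; after substituting and simplifying using the standard restriction-category identities ($\rs{e}=e$, $\<fe,g\>=(e\x 1)\<f,g\>$, and the restriction products axioms from \cite{cockett_lack_2007}) the result drops out.

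Applying this identity to both summands turns the LHS into $\langle\langle 1,\den{a}\rangle,\den{v}\rangle\bigl((\rs{\beta_T}\x 1)R[f] \vee (\rs{\beta_F}\x 1)R[g]\bigr)\pi_1$. Now I use the restriction identity $x\,\rs{y} = \rs{xy}\,x$ together with the pairing law to rewrite $\langle\langle 1,\den{a}\rangle,\den{v}\rangle(\rs{\beta_H}\x 1)$ as $\rs{\langle 1,\den{a}\rangle\beta_H}\,\langle\langle 1,\den{a}\rangle,\den{v}\rangle$. The inner factor $\langle 1,\den{a}\rangle\beta_H$ is exactly $\den{\Gamma\proves\texttt{let}\,x=a\,\texttt{in}\,b}_H$ by the interpretation of \texttt{let} applied to a boolean term (the straightforward substitution lemma for predicates, proved by induction on $b$ in parallel with the one stated for terms). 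Finally, because composition with a fixed map on the left and $\pi_1$ on the right preserves disjoint joins in a restriction category with joins, the expression reassembles into $\rs{\den{\texttt{let}\,x=a\,\texttt{in}\,b}_T}\,\den{v.\rd(x{:}U.m)(a)} \vee \rs{\den{\texttt{let}\,x=a\,\texttt{in}\,b}_F}\,\den{v.\rd(x{:}U.n)(a)}$, which is the interpretation of the RHS.

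The main obstacle is purely bookkeeping: verifying the auxiliary identity $R[eh]=(e\x 1)R[h]$ and carefully tracking the restriction idempotents as they migrate from the domain of the reverse derivative (inside the $\den{\Gamma}\x\den{U}$ factor) out to the domain of the whole expression $\den{\Gamma}$ after precomposition with $\langle 1,\den{a}\rangle$. Notably, axioms \textbf{[RD.6]} and \textbf{[RD.7]} play no role, which is consistent with the remark in the introduction that these axioms are not required to model $\sdpl$.
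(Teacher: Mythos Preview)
Your proposal is correct and follows essentially the same route as the paper's own argument: unfold the interpretation of $\rd$ and \texttt{if}, distribute $R$ over the disjoint join via Proposition~\ref{proposition:rev-diff-join-pres}, apply the auxiliary identity $R[eh]=(e\times 1)R[h]$ (from \textbf{[RD.5]} and \textbf{[RD.9]}), then use $x\,\rs{y}=\rs{xy}\,x$ to migrate the idempotents outward and recognise $\langle 1,\den{a}\rangle\den{b}_H$ as $\den{\texttt{let}\,x=a\,\texttt{in}\,b}_H$. The paper's calculation is line-for-line the same sequence of rewrites.
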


\begin{corollary}[If-then-else transformation]\
  In an interpretation structure on a basic RDRC, we always have 
    \begin{align*}
      & \den{\Gamma,x:U \proves v.\rd(x:U. \texttt{if}\, b \, \texttt{then}\, m \, \texttt{else}\, n)(x)}\\
      &= \den{\Gamma,x:Y \proves \texttt{if}\, b\, \texttt{then} \, v.\rd(x:U.m)(x)\, \texttt{else}\, v.\rd(x:U.n)(x)}
    \end{align*}  
\end{corollary}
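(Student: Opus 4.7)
The plan is to obtain this Corollary as an immediate specialization of the preceding Proposition. The Proposition holds for any closing term $\Gamma \proves a:U$; I will apply it in the extended context $\Gamma, x{:}U$ with $a$ taken to be the free variable $x$. To avoid a notational clash between this free $x$ and the bound $x$ of the $\rd$-expression, I first $\alpha$-rename the bound variable to a fresh name $y$, obtaining on the left-hand side
\[
\den{\Gamma, x{:}U \proves v.\rd(y{:}U.\, \texttt{if}\, b'\, \texttt{then}\, m'\, \texttt{else}\, n')(x)},
\]
where $b', m', n'$ are the results of relabelling $x$ to $y$ inside $b, m, n$. The Proposition, applied at this extended context, then rewrites this as
\[
\den{\Gamma, x{:}U \proves \texttt{if}\, (\texttt{let}\, y = x\, \texttt{in}\, b')\, \texttt{then}\, v.\rd(y{:}U.\, m')(x)\, \texttt{else}\, v.\rd(y{:}U.\, n')(x)}.
\]

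The remaining step is to verify that the trivial let-binding collapses at the semantic level, i.e.\ $\den{\texttt{let}\, y = x\, \texttt{in}\, b'}_H = \den{b'[x/y]}_H = \den{b}_H$ for each $H \in \{T,F\}$. Unfolding the clause for $\texttt{let}$ given in Section \ref{section:cat-interp-sdpl}, the left-hand side is the composite $\den{\Gamma, x{:}U} \to^{\<1,\pi_1\>} \den{\Gamma, x{:}U} \x \den{U} \to^{\den{b'}_H} 1$, where $\den{b'}_H$ is now taken in the weakened context $\Gamma, x{:}U, y{:}U$. A routine substitution lemma for the denotation, proved by mutual structural induction over terms and predicates using only the interpretation clauses, identifies this composite with $\den{\Gamma, x{:}U \proves b}_H$. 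The same reasoning applied inside the two branches reduces $v.\rd(y{:}U.m')(x)$ and $v.\rd(y{:}U.n')(x)$ back to $v.\rd(x{:}U.m)(x)$ and $v.\rd(x{:}U.n)(x)$, this time by $\alpha$-equivalence alone.

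I do not anticipate any serious obstacle: the Corollary is obtained purely by specializing the Proposition and then performing a single semantic $\beta$-reduction on a vacuous let. The only auxiliary fact is the substitution/$\alpha$-renaming lemma for the denotation, which invokes none of the axioms {\bf [RD.$n$]} and follows solely from compositionality of the interpretation over the Cartesian structure. Consequently the Corollary holds already in any basic RDRC supporting the interpretation, matching the statement.
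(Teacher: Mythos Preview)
Your proposal is correct and matches the paper's own argument: the Corollary is obtained from the preceding Proposition by taking $a = x$ in the extended context and then observing that the resulting trivial let-binding $\texttt{let}\ x = x\ \texttt{in}\ b$ has the same denotation as $b$. Your treatment is slightly more careful than the paper's one-line remark in that you make the $\alpha$-renaming explicit to avoid the bound/free clash, but the underlying reasoning is identical.
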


Turning to iteration, if a while-loop terminates, then 
$\texttt{while}\, b\, \texttt{do}\, f$ is $f^n$ for some $n$.  The forward derivative 
admits a tail recursive description:
  \[
    D[f^{n+1}] = \<\pi_0f,D[f]\>D[f^n]  
  \]

$\sdpl$ has two admissable operations: dagger and forward differentiation.

\[
  m^{\dagger[\Gamma]} := y.\rd(x.m)(0) \qquad \fd(x.m)(a).v := \texttt{let}\, z = v \, \texttt{in} \, (y.\rd(x.m)(a))^{\dagger[\Gamma]}
\]
where the $y$ in $m^{\dagger[\Gamma]}$ is fresh.  The recursive description of $D[f^n]$ is useful in proving the following:

\begin{proposition}[Forward-differentiation for while-loops]\label{proposition:forward-diff-while}
  In an interpretation structure on a basic RDRC,
  \begin{enumerate}
    \item For any $\Gamma,x:A\proves m:B$,
      $
      \den{\fd(x.m)(a).v}
      =
       \<\<1,\den{a}\>,\den{v}\> (1\x \iota_1)D[\den{m}]$\
    \item For any $x:A \proves f:A$ we have
        $
        \den{\proves \fd(x.\texttt{while}\, b \, \texttt{do} \, f)(a).v}\\
        = 
        \den{\proves\texttt{let}\, x=a,y=v\, \texttt{in} \, 
              \snd(
                \texttt{while}\, \pi_0b \, \texttt{do}\,
                  (\pi_0f,\fd(x.f)(x).y)
              )}
              $
  \end{enumerate}
\end{proposition}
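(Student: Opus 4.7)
The plan is to prove part (1) by unfolding the definitions of $\fd$ and the dagger, collapsing the resulting nested reverse derivatives into $D[\den{m}]$ via Theorem~\ref{theorem:reverse-are-forward}; then to bootstrap part (2) by combining part (1) with the chain rule for $D$ and the join-preservation result of Proposition~\ref{proposition:rev-diff-join-pres}.

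For part (1), the key observation is that the semantic rule for $\rd$ together with the definition $m^{\dagger[\Gamma]} := y.\rd(x.m)(0)$ give $\den{m^{\dagger[\Gamma]}} = (\iota_0 \x 1)R[\den{m}]\pi_1$, so applying the dagger to the reverse-derivative term $y.\rd(x.m)(a)$ and unfolding the outer $\texttt{let}$ yields $\den{\fd(x.m)(a).v} = \<\<1,\den{a}\>,\den{v}\> R[R[\den{m}]\pi_1]\pi_1$ (with the $\iota_0$ absorbed into the outer pairing by the substitution of $a$). A short calculation using \textbf{[RD.5]}, \textbf{[RD.3]} (which gives $R[\pi_1]=\pi_1\iota_1$), and the universal property of products shows that $R[R[\den{m}]\pi_1] = (1\x\iota_1)R[R[\den{m}]]$. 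Invoking Theorem~\ref{theorem:reverse-are-forward}, namely $D[\den{m}] = (\iota_0\x 1)R[R[\den{m}]]\pi_1$, then collapses the whole expression to $\<\<1,\den{a}\>,\den{v}\>(1\x \iota_1)D[\den{m}]$. The $(1\x \iota_1)$ factor is precisely the bookkeeping that enforces differentiation in the $A$-direction only, zeroing out the $\den{\Gamma}$-component.

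For part (2), with empty context, part (1) rewrites the LHS as $\<\<!,\den{a}\>,\den{v}\>(1\x\iota_1)D[\den{\texttt{while}\, b\, \texttt{do}\, f}]$. The while-interpretation is the join $\bigvee_n (\rs{\den{b}_T}\den{f})^n\rs{\den{b}_F}$, and since Proposition~\ref{proposition:rev-diff-join-pres} gives join-preservation for $R$, and hence for $D$ by Theorem~\ref{theorem:reverse-are-forward}, the derivative distributes over this join. On each summand, the forward chain rule $D[gh] = \<\pi_0 g, D[g]\>D[h]$ together with $D[\rs{e}] = (\rs{e}\x 1)\pi_1$ (derivable from \textbf{[RD.9]}) expands $D[(\rs{\den{b}_T}\den{f})^n\rs{\den{b}_F}]$ into an $n$-fold composition of the \emph{tangent map} $(x,y)\mapsto(\den{f}(x),D[\den{f}](x,y))$, guarded at each step by $\rs{\den{b}_T}$ and terminated by $\rs{\den{b}_F}$. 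For the RHS, the paired while-loop $\texttt{while}\, \pi_0 b\, \texttt{do}\, (\pi_0 f, \fd(x.f)(x).y)$ iterates exactly this tangent map: by part (1) applied pointwise, the second coordinate of each iteration is $D[\den{f}](x,y)$, the guard $\pi_0 b$ tests only the $x$-coordinate, and the outer $\snd$ selects the tangent component at termination. The two expansions therefore agree summand by summand.

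The main obstacle is reconciling the restriction idempotents arising on the two sides of the equality. In particular, one must verify that differentiating $\rs{\den{b}_T}\den{f}$ and $\rs{\den{b}_F}$ produces exactly the guards $\rs{\pi_0 \den{b}_T}$ and $\rs{\pi_0 \den{b}_F}$ that appear on the pair $A\x A$ in the expansion of the paired while-loop; this uses \textbf{[RD.8]}, the formula for $D[\rs{e}]$, and the fact that the tangent map preserves its first coordinate, so that guards depending only on $x$ translate cleanly across the differentiation. Once this pointwise correspondence is verified, an induction on $n$ together with join-preservation yields the claimed equality.
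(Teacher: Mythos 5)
Your proposal is correct and follows essentially the same route as the paper's proof: part (1) by unfolding $\fd$ and the dagger and collapsing the double reverse derivative into $D[\den{m}]$, and part (2) by distributing $D$ over the join defining the while-loop and recognizing each summand as an iterate of the tangent map $T(f)=\<\pi_0 f,D[f]\>$, with the guards commuted across $T$ via the restriction axioms, which is exactly the computation in the paper. Two repairable slips: your intermediate formula $\<\<1,\den{a}\>,\den{v}\>R[R[\den{m}]\pi_1]\pi_1$ does not typecheck as written (the dagger's $\iota_0$ must insert a $0$ into the $\den{B}$-slot, so the precomposite should be $\<\<\<1,\den{a}\>,0\>,\den{v}\>$), and your identity $R[R[\den{m}]\pi_1]=(1\x\iota_1)R[R[\den{m}]]$ needs \textbf{[RD.8]} to absorb the idempotent $\rs{\pi_0 R[\den{m}]}$ arising from $\<f,g\>\pi_1=\rs{f}\,g$ in a restriction category, not just the product laws.
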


On the other hand, the reverse derivative satisfies:
  \[
    R[f^n](a,b) = R[f](a,R[f](f(a),R[f](f(f(a)),\cdots ,b)))  
  \]
Which looks at first glance to be head recursive, and not like something that can be implemented by an 
iteration.  However, with {\bf [RD.6]}, we can do the following:
\[
  R[f^{n+1}] = D[f^{n+1}]^{\dagger[A]} = (T(f)^nD[f])^{\dagger[A]}  \qquad \text{where } T(f) = \<\pi_0f,D[f]\>
\]
This is the basis of the following source transformation for while-loops.

\begin{corollary}[Reverse-differentiation of while-loops]
   In an interpretation structure on an RDRC, let $z:A \proves f:A$ and $z:A \proves b$; then 
   we have 
   \begin{align*}
     & \den{v:A \proves  v.\rd(x.\texttt{while}\, b \, \texttt{do}\, f)(a)}\\
     &=
     \den{v:A \proves (\proves\texttt{let}\, x=a,y=v\, \texttt{in} \, 
     \snd(
       \texttt{while}\, \pi_0b \, \texttt{do}\,
         (\pi_0f,\fd(x.f)(x).y)
     ))^{\dagger[.]}}
\end{align*}
   where $\dagger[.]$ denotes the dagger defined above with respect to the empty context.
\end{corollary}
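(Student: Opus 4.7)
The plan is to reduce the claim to Proposition~\ref{proposition:forward-diff-while}(2) combined with axiom {\bf [RD.6]}. First, I would apply Proposition~\ref{proposition:forward-diff-while}(2) to identify the term sitting inside the dagger with the semantic forward derivative of the while-loop: namely, $\den{\texttt{let}\, x=a,y=v\, \texttt{in}\, \snd(\texttt{while}\, \pi_0b\, \texttt{do}\, (\pi_0f, \fd(x.f)(x).y))}$ equals $\<\den{a}, \den{v}\> D[\den{\texttt{while}\, b\, \texttt{do}\, f}]$, which in turn (by definition of $\fd$) matches $\den{\fd(x.\texttt{while}\, b\, \texttt{do}\, f)(a).v}$.

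Next, I would unfold the definition of $\dagger[\cdot]$ introduced earlier, namely $m^{\dagger[\cdot]} := y.\rd(v.m)(0)$ with $y$ fresh, which semantically instructs us to take $R$ with respect to the distinguished linear variable $v$ and evaluate at $0$. Using the identity $D[g] = (\iota_0 \x 1)R[R[g]]\pi_1$ from Theorem~\ref{theorem:reverse-are-forward}, the dagger of $D[g]$ unfolds into an expression involving $R$ applied to $R[R[g]]$, i.e., a triple reverse derivative. Axiom {\bf [RD.6]} is precisely what collapses such a triple reverse derivative (after the appropriate $(\iota_0 \x 1)$ and projection adjustments) back to $R[g]$. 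Applying this to $g = \den{\texttt{while}\, b\, \texttt{do}\, f}$ yields $D[g]^{\dagger[\cdot]} = R[g]$, and so the right-hand side of the claim becomes $\<\<1, \den{a}\>, \den{v}\> R[\den{\texttt{while}\, b\, \texttt{do}\, f}]\pi_1$, which is by definition $\den{v.\rd(x.\texttt{while}\, b\, \texttt{do}\, f)(a)}$.

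The main obstacle will be verifying that {\bf [RD.6]} can be applied cleanly to the while-loop interpretation, which is the infinite join $\bigvee_n (\rs{\den{b}_T}\den{f})^n \rs{\den{b}_F}$ rather than a finite composite. Here one exploits that by Proposition~\ref{proposition:rev-diff-join-pres} the operator $R$ preserves joins, hence by Theorem~\ref{theorem:reverse-are-forward} so does $D$, so the {\bf [RD.6]} identity propagates through the join and the equality reduces to the finite-approximation case where it is a direct consequence of the axioms of an RDRC. A secondary bookkeeping issue is the interaction between the free variable $v:A$ and the fresh dagger-variable introduced by $\dagger[\cdot]$: one must check that the associator isomorphism $(1 \x \den{A}) \x \den{A} \cong \den{A} \x \den{A}$ noted implicitly in Proposition~\ref{proposition:forward-diff-while}(2) lines up correctly when the dagger is applied, so that the evaluation at $0$ in the dagger definition matches the $(\iota_0 \x 1)$ appearing in the double-$R$ formula for $D$.
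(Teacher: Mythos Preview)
Your approach is essentially the paper's: reduce to Proposition~\ref{proposition:forward-diff-while}(2) and then invoke {\bf [RD.6]} in the form $D[g]^{\dagger[A]} = R[g]$ (equivalently $R[g]^{\dagger[A]\dagger[A]} = R[g]$) with $g = \den{\texttt{while}\, b\, \texttt{do}\, f}$.

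The one unnecessary detour is your ``main obstacle'' paragraph. {\bf [RD.6]} is an axiom quantified over \emph{all} maps of the RDRC, so it applies directly to the join $\bigvee_n (\rs{\den{b}_T}\den{f})^n \rs{\den{b}_F}$; there is no need to push the identity through the join term-by-term via Proposition~\ref{proposition:rev-diff-join-pres}. The join-preservation of $R$ (and hence $D$) was already used in the proof of Proposition~\ref{proposition:forward-diff-while}(2) to identify the inner term with the forward derivative of the while-loop, but once that identification is made, the dagger step is a single application of {\bf [RD.6]} to a single map. Your associator bookkeeping concern is legitimate and is exactly the kind of check the paper sweeps under the rug.
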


\subsection{Smooth recursive definitions}
Until now we discussed the semantics of a fragment of the language described by \cite{journal:Abadi-Plotkin}.
We formally extended their language with while-loops to isolate their behaviour, but missed out on recursive function definitions.  
Given general recursion, one can implement loops using tail recursion.  We now move to discuss their 
full language with recursive function definitions.  This language will be 
called $\sdpl^+$.  To give such an extension, we 
introduce two new raw terms 

\[
  m \, := \, m \ \text{as before} \, | \, f(m) \, | \, \texttt{letrec}\, f(x) \, := m \, \texttt{in} \, n  
\]

In the above, when we form $f(a)$, the symbol $f$ is taken to be a free function variable, and the 
term $\texttt{letrec}\, f(x) \, := \, m \, \texttt{in}\, n$ binds the variable $x$ in $m$ and the 
function variable $f$ in $m$ and $n$.  However, these function variables are of a 
different sort than ordinary variables because they have arity.  That is $f(a)$ only makes sense if $a:B$ and 
$f$ has arity $B\to C$, which we write as $f:B\to C$.  Thus, our typing/term formation rules will have two 
sorts of contexts, one to record function names and the other for ordinary variables.  Our terms in context then 
have the form $\Phi | \Gamma \proves m : B$, and to update the rules from before, just add $\Phi$ to all the contexts.  
The two new rules are 
\[
 \infer{\Phi,f:A\to B|\Gamma \proves f(m):B}{\Phi,f:A \to B|\Gamma \proves m:A}  
 \qquad 
 \infer{\Phi|\Gamma \proves \texttt{letrec}\, f(x)\, :=\, m\,\texttt{in} \, n :C}{\Phi,f:A\to B|x:A \proves m & \Phi,f:A \to B|\Gamma \proves n : C}
\]

We will now give the interpretation of recursive definitions and calls in a basic reverse differential 
join restriction category.  But first, we will review a basic bit of intuition of recursive function theory 
in case the reader is unfamiliar.  

We often write computable functions $A \to^{f} B$ as $f(n) := m$, 
but it is usually helpful to think of $f$ as simply a name for the unnamed function $\lambda n.m$ and 
then write $f = \lambda n. m$.  The idea is that as a computation $f$ has an internal representation 
that uses the variable $n$ somewhere. If $f$
is recursive then that means that the symbol $f$ also appears in $m$, and thus it is a function that depends 
on itself.  To break this cycle we then abstract out the symbol $f$ too.   We write 
$\underline{f} := \lambda f . \lambda n . m$.  This creates a function 
\[
  \fun(A,B) \to^{\underline{f}} \fun(A,B)
\]
$\underline{f}$ takes an arbitrary computable function $A \to^{h} B$ and creates a function that uses $h$ 
instead of $f$ anywhere $f$ was used in the body $m$.  

To give a quick example, consider the computable function 
$\mathsf{fac}(n) := \texttt{if} \, n < 1 \, \texttt{then}\, 1 \, \texttt{else} \, n*\mathsf{fac}(n-1)$.
Then 
\[
  \underline{\mathsf{fac}}(h)(n) := \texttt{if} \, n < 1\, \texttt{then}\, 1\, \texttt{else}\, n*h(n-1)  
\]
The point is that this new function is not recursive.  However, it is instructive to see what happens 
when we apply it to the function it represents.  As an exercise, we leave it to the reader to prove that 
\[
    \underline{\mathsf{fac}}(\mathsf{fac}) = \mathsf{fac}
\]
In other words, the recursive function $\mathsf{fac}$ is a \emph{fixed point} of the functional $\underline{\mathsf{fac}}$.
It is also the best fixed point of $\mathsf{fac}$ in the sense that it is the least defined function 
that is a fixed point of $\underline{\mathsf{fac}}$.  This works in general, given any recursive function 
$r$ it may be obtained as the least fixed point of $\underline{r}$.

To model least-fixed-point phenomena we will use the notion of a pointed directed complete partial order or (DCPPO) for
short.  The first use of DCPPOs to model recursive phenomena is due to Scott \cite{journal:scott-domains-first} in 
giving models of the untyped $\lambda$-calculus.  DCPPOs are used in the semantics of the functional 
programming language PCF in \cite{journal:pcf-original}.  Abstract DCPPO-enriched categories of partial 
maps were used in modelling the semantics of the functional programming language FPC in \cite{journal:pmaps-dcppo-fpcs}.
The DCPPO structure on homsets of $\smooth_P$ was used in \cite{journal:Abadi-Plotkin} to provide a semantics 
of SDPL.  The approach taken here generalizes \cite{journal:Abadi-Plotkin} to an arbitrary 
basic reverse differential join restriction category, highlights the structural aspects of the interpretation,
and uses the axioms of such a category to derive some simplifications to the operational behaviour.  
A connection of $\omega$-CPPOs and restriction categories was introduced using the delay monad in 
\cite{chapter:delay-restriction}.

\begin{definition}
Let $(D,\leq)$ be a partial order.  A subset $A \subseteq D$ is {\bf directed} if $A$ is nonempty 
and any two elements $f,g \in A$ have an upper bound in $A$; i.e., there is an $h \in A$ with $f,g\leq h$.
A partial order $(D,\leq)$ is a {\bf directed complete partial order} if every directed subset $A$ has a 
supremum written $\bigvee_{a\in A} a \in D$.  A directed complete partial order is {\bf pointed} (DCPPO) if 
there is a supremum for the empty set, that is a minimal element $\emptyset \leq d$ for all $d \in D$.  

By a {\bf morphism of DCPPOs} $(P,\leq) \to^{g} (Q,\leq)$ we mean a function $g$ on the underlying sets that 
is monotone and preserves suprema.  We observe minimally that the category of DCPPOs is Cartesian closed.
\end{definition}

\begin{lemma}\cite{journal:Kleene-FP-Theorem}\label{lemma:dcppo-fixpoint-lemma}
  Let $(D,\leq)$ be a DCPPO.  Then 
  \begin{enumerate}
    \item Every morphism $D \to^{g} D$ has a least fixed point; i.e. a $u\in D$ such that $g(u) = u$.
    \item For any other DCPPO $(P,\leq)$ every morphism $P \x D \to^{g} P$ has a parametrized fixed point; i.e., 
          a $P \to^{u} D$ such that 
          \[
            \begin{tikzcd}
              P \ar[r,"u"] \ar[dr,"{\<1,u\>}"'] & D \\
              & P \x D\ar[u,"g"']
            \end{tikzcd}  
          \] 
          In other words, for each $x \in P$, $u(x)$ is a fixed point of $g(x,\blank)$. 
          This parametrized fixed point is often denoted $\mu_y.g(\blank,y)$ and as the 
          fixed point of $g(x,\blank)$ by $\mu_y.g(x,y)$.
  \end{enumerate}
\end{lemma}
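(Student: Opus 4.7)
The plan is to adapt Kleene's classical fixed-point argument to the DCPPO setting.

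For part (1), first I would iterate $g$ from the bottom to form the ascending chain $\emptyset \leq g(\emptyset) \leq g^2(\emptyset) \leq \cdots$, where $\emptyset \leq g(\emptyset)$ is automatic from the definition of $\emptyset$ and monotonicity of $g$ propagates the inequality to every successor by induction. This chain is a directed subset of $D$, so I set $u := \bigvee_n g^n(\emptyset)$. Because $g$ is a morphism of DCPPOs it preserves this directed supremum, giving $g(u) = \bigvee_n g^{n+1}(\emptyset) = u$, where the last equality uses that prepending the extra term $g^0(\emptyset) = \emptyset$ does not change the supremum of an already-ascending chain. For leastness, if $v$ is any fixed point then $\emptyset \leq v$, and monotonicity yields $g^n(\emptyset) \leq g^n(v) = v$ for every $n$, so $u \leq v$.

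For part (2), the cleanest route is via the Cartesian closure of DCPPOs noted just above the lemma. Currying $g$ gives a morphism $\tilde g : P \to D^D$, where $D^D$ is the internal hom, whose underlying set consists of DCPPO morphisms ordered pointwise. Define $\mathsf{fix} : D^D \to D$ by $\mathsf{fix}(h) := \bigvee_n h^n(\emptyset)$; by part (1) this lands on fixed points, and I would check it is a DCPPO morphism by first verifying that for a directed family $\{h_\alpha\}$ one has $(\bigvee_\alpha h_\alpha)^n(\emptyset) = \bigvee_\alpha h_\alpha^n(\emptyset)$ (by induction on $n$, using pointwise suprema and preservation of directed suprema), and then interchanging the two suprema. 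The desired parametrized fixed point is then $u := \tilde g \, \mathsf{fix}: P \to D$, and the fixed-point equation is immediate from the definitions.

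A more elementary alternative that avoids naming the internal hom is to build $u$ level by level. Take $h_0 : P \to D$ to be the constant map at $\emptyset$ and set $h_{n+1} := \langle 1, h_n \rangle \, g$. By induction each $h_n$ is a DCPPO morphism, and the sequence is pointwise increasing, so $u(x) := \bigvee_n h_n(x)$ is well-defined. The identity $u(x) = g(x,u(x))$ then falls out of $g$'s preservation of the directed supremum in its second argument. The main subtlety in either approach is verifying that the resulting $u$ itself preserves directed suprema in the parameter, since this requires interchanging the countable iteration supremum with a directed supremum indexed by a directed subset of $P$. Both families are directed and the pointwise order on $D$-valued functions makes such iterated suprema commute, so the exchange is valid; this is the one genuine calculation in an otherwise routine argument.
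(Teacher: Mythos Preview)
Your proof is correct and, for part (1), coincides exactly with the paper's argument: iterate $g$ from the bottom element, take the supremum of the resulting chain, use supremum-preservation for the fixed-point equation, and monotonicity for leastness. The paper does not spell out part (2) at all, merely remarking that it is ``similar''; your elementary level-by-level construction $h_{n+1} := \langle 1, h_n\rangle g$ is precisely the pointwise version of that same Kleene iteration and is what the paper has in mind, while your Cartesian-closed route via $\mathsf{fix}:D^D\to D$ is a clean but slightly more elaborate packaging of the same idea.
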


Join restriction categories are DCPPO enriched:
\begin{proposition}\label{proposition:join-implies-dcppo}
  Let $\X$ be a restriction category.  Then with respect to the order enrichment of restriction categories: 
  \begin{enumerate}
    \item $\X$ is a join restriction category then the enrichment lies in DCPPOs.
    \item If $\X$ has joins and restriction products then those products are DCPPO enriched products; i.e., $\X(A,B\x C) \simeq \X(A,B) \x \X(A,C)$ qua an isomorphism of DCPPOs.
          Moreover, the ``contraction operator''
          \[
            \X(A,B) \to^{\Delta_A}   \X(A,A\x B)
          \]
          that sends $A \to^{f} B$ to $A \to^{\<1,f\>} A\x B$ is a morphism of DCPPOs.
    \item If $\X$ has joins and is a Cartesian left additive restriction category, then the addition on homsets
          \[
            \X(A,B) \x \X(A,B) \to^{+} \X(A,B)  
          \]
     is a morphism of DCPPOs.
    \item If $\X$ is a reverse differential join restriction category then the operation 
      of reverse differentiation
      \[
        \X(A,B) \to^{R[\blank]}   \X(A\x B,A)
      \]
      is a morphism of DCPPOs.
  \end{enumerate}
\end{proposition}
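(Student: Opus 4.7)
The plan is to tackle the four parts sequentially, with part (1) supplying the DCPPO structure and parts (2)--(4) verifying the morphism conditions against that structure. The main observation threading everything together is that a directed subset of a hom-set in a join restriction category is automatically pairwise compatible, so the existing join-compatible axioms specialize to give directed joins.

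For (1), I would first recall that every restriction category is canonically enriched in posets via $\leq$. In a join restriction category the empty join $\bigvee\emptyset$ exists and is the minimum element $\emptyset$ of each hom-set. Given a directed subset $S \subseteq \X(A,B)$, any $f,g \in S$ admit a common upper bound $h \in S$, and then
\[
\rs{f}g = \rs{f}\rs{g}h = \rs{g}\rs{f}h = \rs{g}f,
\]
using $\rs{f}h=f$, $\rs{g}h=g$, and commutativity of restriction idempotents. Hence $S$ is pairwise compatible, and $\bigvee S$ exists in $\X(A,B)$. Since composition on either side preserves joins of pairwise compatible families in any join restriction category, pre- and post-composition are DCPPO morphisms, which is what it means for the enrichment to land in DCPPOs.

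For (2), the universal property of restriction products gives a bijection $\X(A,B\x C) \cong \X(A,B) \x \X(A,C)$ via $h \mapsto (h\pi_0, h\pi_1)$. Monotonicity in both directions is immediate from the fact that projections and pairing of compatible maps commute with $\rs{(-)}$; join preservation uses that pairing distributes over joins componentwise, a standard fact for restriction products in a join setting. Hence the bijection is an isomorphism of DCPPOs. The contraction operator $\Delta_A$ is the composite of the constant $1_A$ with identity pairing, both of which are DCPPO morphisms, so $\Delta_A$ is one as well. For (3), the addition $f+g$ on the hom-set is, by the Cartesian left additive axioms, obtained by pairing and post-composing by a fixed addition map; preservation of directed suprema follows because pairing and composition do, and the $0$ element is the bottom $\emptyset$-preserving constant (using the left additive axiom $0f \leq 0$ together with joins).

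Finally, (4) follows directly from Proposition \ref{proposition:rev-diff-join-pres}: $R[-]$ is already shown to be monotone and to preserve joins of pairwise compatible families, and by the discussion in (1) directed subsets are such families. The main conceptual obstacle is the gap between ``joins of pairwise compatible families,'' which is what the join restriction axioms hand us directly, and ``directed joins,'' which the DCPPO definition demands. The compatibility argument in (1) closes this gap once, and every subsequent part reuses it without further work.
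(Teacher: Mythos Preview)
Your proof is correct, and parts (1), (2), and (4) follow the paper's approach essentially verbatim: directed subsets are pairwise compatible via a common upper bound, restriction-product pairing gives the DCPPO isomorphism, and Proposition~\ref{proposition:rev-diff-join-pres} handles (4) outright (the paper's own proof of (4) simply re-derives that proposition).

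For (3) you take a different route from the paper. The paper argues elementwise: for a restriction idempotent $e$ one has $e(f+g)=ef+g=f+eg$, and from this it extracts monotonicity and the join formula $(\bigvee_i f_i)+(\bigvee_j g_j)=\bigvee_{i,j}(f_i+g_j)$ directly. You instead factor $+$ as pairing followed by post-composition with the internal map $\pi_0+\pi_1$, and then invoke parts (1) and (2). This is a legitimate and arguably more structural argument, since it reduces (3) entirely to already-established facts. One point to make explicit, though: in the restriction setting the identity $f+g=\langle f,g\rangle(\pi_0+\pi_1)$ is not automatic, because $\langle f,g\rangle\pi_0=\rs{g}\,f$ rather than $f$; you need $\rs{f+g}=\rs{f}\,\rs{g}$ to see that $\rs{g}f+\rs{f}g=f+g$. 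Your remark that ``the $0$ element is the bottom $\emptyset$-preserving constant'' is garbled (the additive $0$ is total, not equal to $\emptyset$), but it is harmless, since DCPPO morphisms here are only required to preserve directed, hence nonempty, suprema.
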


Parts \emph{2}--\emph{4} of Proposition \ref{proposition:join-implies-dcppo} implies that certain operations we will 
need to form from monotone and join preserving maps will again be monotone and join preserving.  

To give the categorical semantics of $\sdpl^+$, we must extend the interpretation developed in section \ref{section:cat-interp-sdpl}.
To begin we first give the interpretation of function contexts.  The idea being that a free function symbol 
could be any map of the correct type, the interpretation of function contexts is given as a product of homsets.
\[
  \den{\emptyset}:= 1 \qquad \den{\Phi,f:A\to B} := \den{\Phi}\x \X(\den{A},\den{B})  
\]
The interpretation of a term in context $\Gamma \proves m : B$ constructed a map $\den{\Gamma} \to^{\den{m}} \den{B}$.
With function contexts, the maps we build now depend on the morphism from $\X(\den{A},\den{B})$ to fill in 
the call to a function.  That is, the interpretation is now a function
\[
    \den{\Phi} \to^{\den{\Phi|\Gamma \proves m : B}} \X(\den{\Gamma},\den{B})
\]

Now, we are building a function and to do so it suffices to build a map in $\X(\den{\Gamma},\den{B})$ for 
each element $\phi \in \den{\Phi}$.  We write $\den{m}_\phi$ for the value of $\den{m}$ at $\phi$.  The 
construction is by induction and for the terms from $\sdpl$, the construction is exactly the same 
with the addition of a $\phi$ subscript decorating the terms appropriately.  
For example, $\den{\Phi|\Gamma\proves \texttt{let}\, x=m\, \texttt{in}\, n}_\phi := \<1,\den{m}_\phi\>\den{n}_\phi$.
However, we can build the interpretation entirely using external structure by induction as well.

For example, 
interpretation of $\texttt{let}\, x=m\, \texttt{in}\, n$ may be given using the ``contraction operator'', and this construction is element free. 
\begin{tiny}
\[
\begin{tikzcd}[column sep=4em]
  \den{\Phi} \ar[r,"{\<\den{m},\den{n}\>}"] \ar[drr,"\den{\texttt{let}\, x=m\, \texttt{in} \, n}"']
  & {\X(\den{\Gamma},\den{A})\x \X(\den{\Gamma}\x \den{A},\den{B})} \ar[r,"\Delta_\Gamma\x 1"]
  & {\X(\den{\Gamma},\den{\Gamma}\x \den{A})\x \X(\den{\Gamma}\x \den{A},\den{B})} \ar[d,"\cdot"]\\
  && {\X(\den{\Gamma},\den{B})}
\end{tikzcd}
\]
\end{tiny}
We will leave it to the reader to construct the interpretation of $v.\rd(x.m)(a)$ using a similar 
idea, as well as the reverse differential operator $\X(A,B) \to^{R[\blank]} \X(A\x B,A)$.

For $\sdpl^+$, we extend this to the two new terms.  Given a function context $\Phi = (f_1,\ldots,f_n)$ 
then for any $\phi\in\den{\Phi}$ we have that $\phi=(\phi_1,\ldots,\phi_n)$ are all maps in $\X$: 
if $f_i : A_i \to B_i$ then $\phi_i : \den{A_i} \to \den{B_i}$.  We will write $\phi(f_i)$ to denote $\phi_i$.
We also make use of the ``no-free-variable'' assumption for recursive definitions; that is, in the type
formation rule for recursive definitions $\texttt{letrec}\, f(x):=m\, \texttt{in} \, n$, $m$ must 
have at most a unique free variable, and it must be $x$.

\begin{description}
  \item[Fun-Call:] $\den{\Phi,f:A \to B|\Gamma\proves f(m):B}_\phi := \den{\Gamma} \to^{\den{m}_\phi} \den{A} \to^{\phi(f)} \den{B}$
  \item[Rec-Def:] First note that if we just translate a simple recursive function $\texttt{letrec} \, f(x) := m$,  
                  we see that $x$ is a free variable and $f$ is a free function variable in $m$.  
                  That is, we have $f:A\to B|x:A \proves m:A$.  Then note that the 
                  interpretation we are developing would interpret $m$ as a function 
                  \[
                    \X(\den{A},\den{B}) \to^{\den{m}} \X(\den{A},\den{B})  
                  \]
                  This is exactly the sort of underlined function we looked at earlier: it takes each $h: \den{A}\to\den{B}$ in $\X$
                  and uses it by the above translation of function calls, any where that $f$ was used in $m$.  
                  Then by Lemma \ref{lemma:dcppo-fixpoint-lemma}, we may take its fixed point, $\mu$.  We 
                  then get a map $\den{A} \to^{\mu} \den{B}$ such that $\den{m}\mu = \mu$ and is the least 
                  defined such map, giving us the interpretation of the recursive function, 
                  and we would write $\den{\mathsf{letrec}\, f(x):=m} = \mu$.  More generally, in $m$ the 
                  unique variable condition only applies to ordinary variables, but $m$ could have multiple function 
                  variables.  Then if we translate $\Phi,f:A\to B|x:A \proves m:B$ we get a map 
                  \[
                    \den{\Phi}\x \X(\den{A},\den{B}) \to^{\den{m}} \X(\den{A},\den{B})  
                  \]
                  We may then apply the second part of Lemma \ref{lemma:dcppo-fixpoint-lemma} and obtain a
                  parametrized fixed point 
                  \[
                   \den{\Phi} \to^{\mu_f.\den{m}_{(\blank,f)}}  \X(\den{A},\den{B})
                  \]
                  Likewise if we translate $\Phi,f:A\to B|\Gamma \proves n : C$, we get a map 
                  \[
                   \den{\Phi}\x \X(\den{A},\den{B}) \to^{\den{n}} \X(\den{\Gamma},\den{C})  
                  \]
                  Then, finally, the interpretation of $\texttt{letrec}\, f(x):=m\, \texttt{in}\, n$ is 
                  defined by the following diagram:
                  \[
                    \begin{tikzcd}[column sep=6em]
                      \den{\Phi}
                        \ar[r,"{\<1,\mu_f.\den{m}_{(\blank,f)}\>}"] 
                        \ar[dr,"\den{\texttt{letrec}\, f(x):=m\,\texttt{in}\, n}"']
                         & \den{\Phi} \x {\X(\den{A},\den{B})} \ar[d,"\den{n}"]\\
                      & \X(\den{\Gamma},\den{C})
                    \end{tikzcd}
                  \]
                  We may also define it componentwise as 
                  \[
                   \den{\texttt{letrec}\, f(x):=m\, \texttt{in}\, n}_\phi
                    := \den{n}_{(\phi,\mu_f.\den{m}_{(\phi,f)})} 
                  \]
\end{description}

Note that the above definition is only well-defined if we can prove that the interpretation 
$\den{\Phi}\to^{\den{m}} \X(\den{\Gamma},\den{B})$ always yields a monotone and join preserving 
function between the DCPPOs, so that in the last step the use of Lemma \ref{lemma:dcppo-fixpoint-lemma} 
is justified.

\begin{proposition}\label{proposition:recursive-defn-well-defined}
  Let $\X$ be a basic reverse differential join restriction category, with a specified interpretation structure
  for $\sdpl^+$.  Then the interpretation of terms in context is always a monotone, join preserving 
  function between the DCPPOs.  In particular, the construction is well-defined.
\end{proposition}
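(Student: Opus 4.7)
The plan is to proceed by simultaneous induction on the structure of terms $\Phi | \Gamma \proves m : B$ and boolean terms $\Phi | \Gamma \proves b$, showing at each stage that the interpretation function $\den{\Phi} \to \X(\den{\Gamma}, \den{B})$ is a morphism of DCPPOs (monotone and join-preserving). The crucial observation is that every clause of the interpretation is obtained by composing together operations each of which has already been shown to be a DCPPO-morphism, either trivially or via Proposition~\ref{proposition:join-implies-dcppo}. Thus the proof is largely a bookkeeping exercise, and the only nontrivial point is the clause for $\texttt{letrec}$.

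For the base cases (variables, real constants, unit, $\true$, $\false$), the interpretation is a constant function of $\phi$, hence trivially a DCPPO-morphism. For the inductive cases in the $\sdpl$ fragment, I would appeal to the external, element-free formulation of the interpretation: addition uses the morphism $+:\X(A,B)\times\X(A,B)\to\X(A,B)$ (part \emph{3} of Proposition~\ref{proposition:join-implies-dcppo}); let-expressions, pairing, and projections use the contraction operator $\Delta_\Gamma$ and product iso (part \emph{2}); operation symbols, first/second projections, and predicate symbols are postcomposition with a fixed map, which is a DCPPO-morphism since composition in $\X$ preserves joins in each variable (part \emph{1}); the reverse derivative clause is handled by part \emph{4}. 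The if-then-else and while-loop clauses additionally use that binary join and countable join of a disjoint family are DCPPO-morphisms on their arguments, and that precomposition with a restriction idempotent $\rs{\den{b}_H}$ is monotone and join-preserving. In each case the interpretation of the whole is a composite of DCPPO-morphisms, hence again a DCPPO-morphism.

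For function calls, the interpretation $\den{\Phi,f:A\to B | \Gamma \proves f(m):B}_\phi = \den{m}_\phi \cdot \phi(f)$ factors as the composite
\[
\den{\Phi}\times\X(\den{A},\den{B})
\xrightarrow{\den{m}\times 1}
\X(\den{\Gamma},\den{A})\times\X(\den{A},\den{B})
\xrightarrow{\,\cdot\,}
\X(\den{\Gamma},\den{B}),
\]
and composition in $\X$ is a DCPPO-morphism jointly in its two arguments (again by part \emph{1} of Proposition~\ref{proposition:join-implies-dcppo}); the projection $\den{\Phi}\times\X(\den{A},\den{B})\to\X(\den{A},\den{B})$ picking out $\phi(f)$ is trivially a DCPPO-morphism.

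The main obstacle, and the one place where something beyond Proposition~\ref{proposition:join-implies-dcppo} is needed, is the $\texttt{letrec}$ clause. Here I must check that the parametrized least fixed-point operator $\mu_y.(-) : [\den{\Phi}\times\X(\den{A},\den{B}),\X(\den{A},\den{B})] \to [\den{\Phi},\X(\den{A},\den{B})]$ sends DCPPO-morphisms to DCPPO-morphisms. This is a standard fact in domain theory, following from Kleene's construction in Lemma~\ref{lemma:dcppo-fixpoint-lemma}: $\mu_y.g(\phi,y) = \bigvee_{k\in\omega} g^{(k)}(\phi)$ where $g^{(0)}(\phi)=\emptyset$ and $g^{(k+1)}(\phi)=g(\phi,g^{(k)}(\phi))$, and each $g^{(k)}$ is a DCPPO-morphism in $\phi$ by induction using that $g$ is jointly a DCPPO-morphism; the countable supremum of DCPPO-morphisms in the function-space DCPPO is again a DCPPO-morphism. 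Combined with the inductive hypothesis that $\den{m}$ is a DCPPO-morphism $\den{\Phi}\times\X(\den{A},\den{B}) \to \X(\den{A},\den{B})$, this shows $\mu_f.\den{m}_{(-,f)}$ is a DCPPO-morphism; postcomposing with $\den{n}$ (also a DCPPO-morphism by induction) and the contraction $\Delta$ finishes the inductive step and yields the proposition.
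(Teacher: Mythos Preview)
Your proposal is correct and matches the approach the paper intends. The paper does not give an explicit proof of this proposition; it instead sets up Proposition~\ref{proposition:join-implies-dcppo} precisely so that each clause of the interpretation can be expressed as a composite of DCPPO-morphisms (composition, pairing/contraction, addition, $R[\blank]$), remarks that ``Parts \emph{2}--\emph{4} of Proposition~\ref{proposition:join-implies-dcppo} implies that certain operations we will need to form from monotone and join preserving maps will again be monotone and join preserving,'' exhibits the element-free construction for $\texttt{let}$, and leaves the rest to the reader. Your structural induction carries this out exactly, and your treatment of the $\texttt{letrec}$ case---observing that the Kleene iterates $g^{(k)}$ are DCPPO-morphisms by induction and that their directed supremum in the function space is again one---is the standard argument and the only step requiring something beyond Proposition~\ref{proposition:join-implies-dcppo}.

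One small point worth making explicit in your write-up: the induction must be simultaneous over ordinary terms and boolean terms, and for the control-structure clauses you rely not only on the inductive hypothesis for $\den{b}_T,\den{b}_F$ but also on the fact that the restriction operator $f\mapsto\rs{f}$ itself preserves directed joins (a basic property of join restriction categories). You allude to this, but it is the piece that makes ``precomposition with $\rs{\den{b}_H}$'' vary continuously in $\phi$ rather than being a fixed map.
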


\subsection{Operational semantics}
The operational semantics used by \cite{journal:Abadi-Plotkin} defined a sublanguage of the raw terms called 
\textsf{trace terms}.  These are generated by the following grammar:

\[
  \mathsf{tr} := x \, | \,
                 r \ (r\in \R) \,  | \,
                 \op(\mathsf{tr}) \, | \, 
                 \texttt{let} \, x = m \, \texttt{in} \, n \, | \, 
                 * \, |\, (\mathsf{tr},\mathsf{tr}) \, | \, \fst(\mathsf{tr}) \, | \, \snd(\mathsf{tr})
\]

Abadi and Plotkin also defined a sublanguage of trace terms called values.

\begin{align*}
  \mathsf{v} &:= x \, | \, r \ (r\in \R)  \, | * \, | \, (v,v) \\
  \mathsf{v}_\mathsf{bool} &:= \true \, |\, \false
\end{align*}

The operational semantics of a program then consists of two mutually inductively defined reductions:
symbolic evaluation and ordinary evaluation -- the former yields a trace term and the latter yields 
a value.  Then the main idea is that to evaluate a term, when you hit a reverse differential, 
$v.\rd(x.f)(a)$, you evaluate $f$ symbolically, just enough to remove control structures and derivatives 
giving a trace term.  And then this trace term is differentiated symbolically, yielding a trace 
term, and the evaluation continues.  

Note that defining symbolic reverse differentiation does not require any evaluation functions.  However, 
we do at this point require, as \cite{journal:Abadi-Plotkin} did, that for each function symbol 
$\op \in \Sigma(T,U)$ there is an associated a function symbol $\op_R\in \Sigma(T\x U,T)$.  The idea is that $\op_R$ 
is the reverse derivative of $\op$.  We will write $v.\op_R(a)$ as notation for $\op_R(a,v)$.  Then 
define symbolic reverse differentiation $v.\Rd(x.f)(a)$ by induction over trace terms $f$ and where $v$ and $a$ are values:
\begin{align*}
  w.\Rd(x.y)(a) &= \begin{cases} w & x=y \\ 0 & x\ne y \end{cases}\\
  w.\Rd(x.r)(a) &= 0 \qquad r \in \R \\
  w.\Rd(x.m+n)(a) &= w.\Rd(x.m)(a) + w.\Rd(x.n)(a) \\
  w.\Rd(x.\op(m))(a) &= \texttt{let}\, x=a,t=w.\op_R(m) \, \texttt{in}\, t.\rd(m)(a) \qquad t \text{ fresh}\\
  w.\Rd(x.\texttt{let}\, y=d\, \texttt{in}\, e)(a)
                 &= \texttt{let}\, x=a ,y=d\, \texttt{in}\, w.\rd(x.e)(a) \\
                &+ (\texttt{let}\, t = w.\rd(y.e)(y) \,\texttt{in}\, t.\rd(x.d)(a)) \qquad t \text{ fresh}\\
  w.\Rd(*)     &= 0\\
  w.\Rd(x.(u,v))(a) &= \texttt{let}\, (y,z) = w \, \texttt{in}\, y.\Rd(x.u)(a) + z.\Rd(x.v)(a)\\
  w.\Rd(x.\fst(m))(a) &= \texttt{let}\, x=a,t=m \, \texttt{in} \, (w.0).\Rd(x.m)(a) \qquad t \text{ fresh}\\
  w.\Rd(x.\snd(m))(a) &= \texttt{let}\, x=a,t=m\, \texttt{in} \, (0,w).\Rd(x.m)(a) \qquad t \text{ fresh}
\end{align*}
The let term is also the chain rule but for differentiating with 
respect to the two variable function $\Gamma,x,y \proves n$, so that we get the usual rule
$\partial n/\partial t = \partial n/\partial x\cdot \partial x/\partial t + \partial n/\partial y\cdot \partial y/\partial t$
appropriately reversed.

Also, for the projection rule, one might have expected just $(w,0).\Rd(m)(a)$.  Under  interpretation 
we certainly get a term of the form $R[a\pi_0] = (\rs{a} \x \iota_0)R[a]$.  However by {\bf [RD.8]},
$(\rs{a} \x 1)R[a] = R[a]$. We will see below that if our evaluation satisfies a certain 
property, then the simpler translation is warranted.

Then as long as our interpretation always sends $\op_R$ to the reverse derivative of $\op$, then 
symbolic and formal reverse differentiation agree under interpretation.

\begin{proposition}[Symbolic differentiation correctness]\label{proposition:symb-correct}
  Suppose $\X$ is a basic reverse differential join restriction category, and suppose that we have a fixed interpretation 
  of $\sdpl$ into $\X$ for which $\den{\op_R} = R[\den{\op}]$ then for all values $a,v$ and for all traced terms 
  $m$ 
  \[
    \den{v.\rd(x.m)(a)} = \den{v.\Rd(x.m)(a)}
  \]
\end{proposition}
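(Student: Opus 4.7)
The plan is to proceed by induction on the structure of the trace term $m$. For each syntactic form we unfold both the categorical interpretation
\[
\den{v.\rd(x.m)(a)} = \<\<1,\den{a}\>,\den{v}\>\, R[\den{m}]\, \pi_1
\]
and the symbolic reverse derivative $v.\Rd(x.m)(a)$, then verify the two sides agree using the appropriate axioms of a reverse differential restriction category together with the hypothesis $\den{\op_R} = R[\den{\op}]$.

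First I would dispatch the base cases. For $m = y$, we split on whether $y = x$ or $y$ is drawn from the outer context. In the former, $\den{m}$ is a projection onto the $x$-coordinate and the equality follows from \textbf{[RD.3]}; in the latter, $\den{m}$ factors through the projection off $x$, so \textbf{[RD.3]} produces the zero in the $x$-direction, matching $0$ on the symbolic side via \textbf{[RD.2]}. For $m = r \in \R$ the interpretation factors through $1$, and \textbf{[RD.2]} reduces the reverse derivative to a restriction composed with $0$; since $a$ and $v$ are values this collapses to $0$. The case $m = *$ is the same argument with $B = 1$.

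Next I would handle the remaining inductive cases one at a time. Sums are direct from \textbf{[RD.1]}; pairings use \textbf{[RD.4]}; projections $\fst(m)$ and $\snd(m)$ follow by combining the chain rule \textbf{[RD.5]} with $R[\pi_j] = \pi_1 \iota_j$ from \textbf{[RD.3]}; this is where the simplification to $(w,0).\Rd(x.m)(a)$ (rather than something involving an extra $\iota_j$) is justified, with axiom \textbf{[RD.8]} absorbing the restriction idempotent that \textbf{[RD.5]} produces in front. The operation case $\op(m)$ is a single application of the chain rule \textbf{[RD.5]} together with the hypothesis $\den{\op_R} = R[\den{\op}]$, unfolding the trace-term abbreviation $w.\op_R(m)$ as a let-bound use of the pair $(m,w)$.

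The main obstacle will be the \texttt{let} case. Here the symbolic rule decomposes the reverse derivative of $\texttt{let}\, y = d\, \texttt{in}\, e$ into a sum of two contributions: the direct dependence of $e$ on $x$ through the preserved context, and the dependence of $e$ on $y$ composed with the dependence of $d$ on $x$. To match this I would rewrite $\den{\texttt{let}\, y=d\,\texttt{in}\, e}$ as $\<1,\den{d}\>\den{e}$, apply \textbf{[RD.5]}, and then expand $R[\<1,\den{d}\>]$ using \textbf{[RD.4]} together with the identity $R[1] = \pi_1$ (derived from \textbf{[RD.3]} and \textbf{[RD.4]}). The two summands produced by \textbf{[RD.4]} correspond exactly to the two terms in the symbolic rule, and the fresh variable $t$ corresponds to the intermediate factor $\<\pi_0\<1,\den{d}\>,\pi_1\>R[\den{e}]$ produced by the chain rule. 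The delicate bookkeeping is matching the nested contexts: the let-interpretation inserts an extra product factor for $y$, while the symbolic rule keeps $x$ in scope inside both sub-derivatives. Once this is aligned, the inductive hypothesis applied separately to $d$ and $e$ closes the case. Throughout, \textbf{[RD.8]} plays a quiet role of absorbing redundant restrictions on the values $a$ and $v$ so that the two denotations coincide on the nose.
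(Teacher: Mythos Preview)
The paper states this proposition without proof, so there is no argument to compare against directly. Your plan---structural induction on the trace term $m$, matching each clause of the symbolic definition of $\Rd$ to the corresponding axiom \textbf{[RD.1--5,8,9]}---is the expected route and is correct in outline.

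One imprecision worth tightening: in the projection cases you say \textbf{[RD.8]} ``absorbs'' the restriction idempotent produced by the chain rule. It does not get absorbed. Applying \textbf{[RD.5]} to $\den{m}\pi_0$ and then \textbf{[RD.3]} leaves a factor $\rs{\pi_0\den{m}}$ in front of $\pi_1\iota_0$, and this restriction is exactly what the binding $\texttt{let}\,x=a,\,t=m\,\texttt{in}\,\ldots$ in the symbolic rule is there to reproduce on the other side (the fresh $t$ is never used; the binding exists solely to record that $m$ must be defined at $a$). The paper flags this point in the paragraph just before the proposition. So the bookkeeping in that case is: show that the interpretation of the $\texttt{let}\,t=m$ wrapper yields precisely the restriction $\rs{\<1,\den{a}\>\den{m}}$, and that this matches what falls out of \textbf{[RD.5]}. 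The same remark applies to the $\op$ case, where the symbolic rule also inserts $\texttt{let}\,x=a$ before computing $w.\op_R(m)$.

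Apart from that, the decomposition you give for the \texttt{let} case (write $\<1,\den{d}\>\den{e}$, apply \textbf{[RD.5]}, then expand $R[\<1,\den{d}\>]$ via \textbf{[RD.4]} into the two summands) is the right idea and will go through once the context-reindexing is tracked carefully.
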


We have an analogous proposition for the interpretation of all $\sdpl^+$.  

\begin{proposition}[Symbolic differentiation correctness extended]\label{proposition:symb-correct-extended}
  Suppose $\X$ is a basic reverse differential join restriction category, and that we have a fixed interpretation 
  of $\sdpl^+$ into $\X$ for which $\den{op_R} = R[\den{\op}]$, then for all values $a,v$ and for all traced terms 
  $m$:
  \[
   \den{v.\rd(x.m)(a)}_\phi = \den{v.\Rd(x.m)(a)}_\phi
  \]
\end{proposition}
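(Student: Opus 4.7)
The plan is to prove the statement by structural induction on the trace term $m$, with the parameter $\phi \in \den{\Phi}$ carried uniformly throughout. The essential observation is that, once $\phi$ is fixed, the interpretation $\den{-}_\phi$ restricted to trace terms is built by exactly the same clauses as the interpretation of $\sdpl$ in Proposition \ref{proposition:symb-correct}; the only novelty of the extended language is how free function symbols in $\Phi$ are resolved (via $\phi(f)$), but since the grammar of trace terms contains neither recursive definitions nor unevaluated function calls, no new inductive cases arise. Thus the body of the proof is, case by case, the same argument as for Proposition \ref{proposition:symb-correct}, decorated with $\phi$.

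First I would dispatch the atomic cases. Variables split by $x=y$ versus $x\neq y$ and are handled by \textbf{[RD.3]}; real constants give $0$ on both sides by \textbf{[RD.2]}; sums of trace terms use \textbf{[RD.1]} to distribute the reverse derivative over the addition, at which point the two inductive hypotheses apply. Next, the operation case $\op(m')$: the interpretation is the composite $\den{m'}_\phi\,\den{\op}$, and applying the chain rule \textbf{[RD.5]} combined with the hypothesis $\den{\op_R} = R[\den{\op}]$ and the inductive hypothesis on $m'$ delivers the claim. Tuples $(u,v)$ are handled by \textbf{[RD.4]}; for projections one uses \textbf{[RD.3]} together with \textbf{[RD.8]} to absorb the restriction-idempotent factor $\rs{\den{a}_\phi}\x 1$ into $R[\den{a}_\phi]$, which is what justifies the simpler symbolic translation (as noted in the discussion preceding the statement).

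The most delicate case is the $\texttt{let}$ rule. Here $\den{\texttt{let}\,y=d\,\texttt{in}\,e}_\phi = \<1,\den{d}_\phi\>\,\den{e}_\phi$, and unfolding $R[-]$ on this composite by \textbf{[RD.5]} produces two summands: one coming from differentiating through $\den{e}_\phi$ in its $\Gamma$-component, and one from the chain rule propagating through $\den{d}_\phi$. After rewriting these using the inductive hypothesis applied to $d$ and to $e$, they match termwise with the two contributions $\texttt{let}\,x=a,y=d\,\texttt{in}\,w.\rd(x.e)(a)$ and $\texttt{let}\,t = w.\rd(y.e)(y)\,\texttt{in}\,t.\rd(x.d)(a)$ in the definition of $w.\Rd(x.\texttt{let}\,y=d\,\texttt{in}\,e)(a)$.

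The main obstacle is not any single case but rather the compatibility bookkeeping: one must check that constructing the reverse derivative pointwise in $\phi$ (i.e.\ as $R[\den{m}_\phi]$) agrees with the parameterised interpretation obtained from Proposition \ref{proposition:recursive-defn-well-defined}. This is precisely where Proposition \ref{proposition:join-implies-dcppo} does its work: since $R[-]$, pairing, addition, and the contraction operator are all monotone and join-preserving morphisms of DCPPOs, they commute with the fixed-point constructions that could in principle appear inside $\phi$. Given this compatibility, the induction goes through identically to the unextended case of Proposition \ref{proposition:symb-correct}, and the result follows.
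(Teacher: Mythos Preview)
The paper does not give an explicit proof of this proposition; it merely states it after Proposition~\ref{proposition:symb-correct} with the remark ``We have an analogous proposition for the interpretation of all $\sdpl^+$,'' so the intended argument is precisely the structural induction on trace terms that you outline, decorated with the parameter $\phi$. Your case analysis and the axioms invoked (\textbf{[RD.1--5,8]}) are the right ones and match what the paper implicitly assumes.

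One small observation: the ``main obstacle'' you flag concerning DCPPO compatibility is more than is needed here. By the grammar, trace terms contain no function variables $f$ and no $\texttt{letrec}$; the same holds for values $a,v$. Consequently the interpretation $\den{m}_\phi$ of a trace term (and of $a$, $v$) is constant in $\phi$, so no interaction with fixed points or join preservation arises at all --- the extended statement is literally the unextended Proposition~\ref{proposition:symb-correct} evaluated at an irrelevant parameter. The DCPPO machinery of Propositions~\ref{proposition:join-implies-dcppo} and~\ref{proposition:recursive-defn-well-defined} is needed to make the interpretation of $\sdpl^+$ well-defined globally, but not for this particular induction. Stripping that paragraph would tighten the argument without loss.
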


We then define the operational semantics of $\sdpl$ exactly as done by Abadi and Plotkin \cite{journal:Abadi-Plotkin}: an \textbf{operational
structure} is given by $(\ev,\bev,{\underline{~~}}_R)$ where 
\[
  \ev_{T,U} : \Sigma(T,U) \x \mathsf{v}_T \to \mathsf{v}_U  \qquad 
  \bev_T : \mathsf{Pred}(T) \x \mathsf{v}_T \to \mathsf{val}_\mathsf{bool}
\]
are partial functions.  We denote closed value terms $\mathsf{v}$ that have type $Y$ as
 $\mathsf{v}_Y$ and 
the set of closed $\mathsf{v}_\mathsf{bool}$ as $\mathsf{val}_\mathsf{bool}$ (these sets are precisely those that require formation 
in an empty context $\proves m:A$ and $\proves b$).  Further 
\[
  \underline{~~}_R : \Sigma(T,U) \to \Sigma(T\x U,T) \qquad \op \mapsto \op_R
\]
With these three pieces one may define ordinary reduction $\Rightarrow$ from terms to values and symbolic reduction $\rightsquigarrow$
from terms to trace terms by induction; see \cite{journal:Abadi-Plotkin} for details.  For $\sdpl$ these reduction relations 
are formulated with respect to a value environment: this is a mapping of variable names to 
closed value terms.  For $\sdpl^+$ we also require a function environment: this is a mapping $\varphi$ of 
function names to closures.  A closure is a tuple $(\varphi,f,x,m)$ where $m$ has at most the free 
ordinary variable $x$, and additionally, all the free function variables in $m$ except $f$ are in the domain 
of $\varphi$.  The idea is that closures are created when evaluating $\texttt{letrec} \, f(x) := m \, \texttt{in}\, n$; 
if our current function environment is $\varphi$ we extend it with $(\varphi,f,x,m)$ and continue 
evaluating $n$ -- this way if $n$ calls $f$ then the definition of $f$ can be looked up in the 
function environment, and any symbol that the body of $f$ requires to operate will be there too.

\section{Denotational Semantics}\label{sec:den-sem}
An interpretation structure $(A \in \X_0,(1\to^{a_r}A)_{r\in \R},\den{\blank},\den{\blank}_T,\den{\blank}_F)$
is a \textbf{differentially denotational interpretation structure} when 
\begin{enumerate}
  \item For all closed value terms of $\mathsf{v}$ we have that $1 \to^{\den{\mathsf{v}}} \den{A}$ is a total point of $\den{A}$;
  \item For all $\op \in \Sigma$ we have $R[\den{\op}] = \den{\op_R}$;
  \item For all closed value terms $v \in \mathsf{v}_A$ we have 
        \[
          \begin{tikzcd}
            1 \ar[r,"\den{v}"] \ar[dr,"\den{\ev(\op,v)}"'] & \den{A}\ar[d,"\den{\op}"]\\
            & \den{B}
          \end{tikzcd}
          \qquad 
          \begin{tikzcd}
            1 \ar[dr,"\den{\bev(\mathsf{pred},v)}_H"'] \ar[r,"\den{v}"] & \den{A} \ar[d,"\den{op}_H"]\\
            & 1
          \end{tikzcd}
        \]
        where $H$ is either $T$ or $F$.  In particular, both sides may be undefined, but they must be undefined simultaneously.
\end{enumerate}

The idea behind showing that a denotational semantics captures a language's operational semantics is that if $m \Rightarrow v$ then $\den{m} = \den{v}$.  However, the operational semantics for $\sdpl$ and $\sdpl^+$
is defined with respect to value and function environments, and we have two operational relations.  
Interpreting a term $m$ with free variables $x_1,\ldots,x_n$ in a value environment 
$\{x_i := v_i\}_{1 \leq i \leq }$ is straightforward: since each $v_i$ is a closed term, first interpret 
$m$ as above: $\den{\Gamma} \to^{\den{m}} \den{B}$, and then precompose with the point of $\den{\Gamma}$ 
given by $1 \to^{\<\den{v_i}\>_{i\leq n}} \den{\Gamma}$.  Next we need the following lemma:

\begin{lemma}
The interpretation of terms of $\sdpl^+$ extends to allow the construction of an element of $\den{\Phi}$ for 
each function environment $\varphi$ whose domain is $\Phi$.
\end{lemma}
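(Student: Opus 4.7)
The plan is to define $\den{\varphi}$ by well-founded induction on closures, using the parametrized fixed point provided by Lemma \ref{lemma:dcppo-fixpoint-lemma} in the same way it was used to interpret $\texttt{letrec}$ expressions. First I would assign each closure $(\varphi', f, x, m)$ a rank, namely one more than the maximum rank of the closures appearing in the codomain of $\varphi'$, with the empty environment given rank $0$. Because closures are produced only by the operational rules for $\texttt{letrec}$, which pairs $m$ with the ambient function environment \emph{before} it is extended with $f$, every closure generated during evaluation has a well-defined finite rank.

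For the inductive step, fix a closure $c = (\varphi', f, x, m)$ with $f : A \to B$, and assume inductively that $\den{\varphi'} \in \den{\Phi'}$ has already been constructed, where $\Phi' = \text{dom}(\varphi')$. The definition of a closure forces $\Phi', f : A \to B \mid x : A \vdash m : B$, so the interpretation of terms developed in the preceding subsection gives a function
\[
\den{m} : \den{\Phi'} \times \X(\den{A}, \den{B}) \to \X(\den{A}, \den{B}).
\]
By Proposition \ref{proposition:recursive-defn-well-defined} this map is a morphism of DCPPOs, so Lemma \ref{lemma:dcppo-fixpoint-lemma}(2) produces a parametrized fixed point $\mu_f.\den{m}_{(\blank, f)} : \den{\Phi'} \to \X(\den{A}, \den{B})$, and I declare $\den{c}$ to be its value at $\den{\varphi'}$. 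Note that this is precisely the construction used to give semantics to $\texttt{letrec}\, f(x) := m \, \texttt{in}\, n$, so the two definitions cohere. Given a function environment $\varphi$ with domain $\Phi = (f_1 : A_1 \to B_1, \ldots, f_n : A_n \to B_n)$, I would then tuple together the interpretations of the individual closures to obtain $\den{\varphi} := (\den{\varphi(f_1)}, \ldots, \den{\varphi(f_n)}) \in \den{\Phi}$.

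The main obstacle is verifying that the induction is actually well founded and that the resulting element is independent of inessential choices. Well-foundedness relies on the fact that each $\texttt{letrec}$ extends the ambient environment by exactly one closure whose \emph{internal} environment is the strictly-smaller pre-extension one, so the chain of closures-within-closures bottoms out after finitely many steps. Independence of ordering of the entries in $\Phi$ follows because $\den{\Phi}$ is constructed via Cartesian products of hom-sets, whose universal property is symmetric in the factors, and the fixed-point construction for each closure depends only on that closure and its own subordinate environment. With these checks in place the construction is unambiguous, and the lemma follows.
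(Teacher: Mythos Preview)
Your proposal is correct. The paper states this lemma without proof, so there is no reference argument to compare against, but your construction---well-founded induction on the nesting depth of closures, invoking the same parametrized fixed point from Lemma~\ref{lemma:dcppo-fixpoint-lemma} that already interprets $\texttt{letrec}$, and then tupling---is precisely the natural one and coheres with the denotational clause for $\texttt{letrec}$ as you observe.

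Two minor remarks. First, your justification of well-foundedness appeals to how closures are \emph{produced} by the operational rules, but the lemma as stated quantifies over arbitrary function environments, not only those reached during evaluation; the cleaner argument is simply that function environments and closures are mutually inductively defined as finite syntactic data, so your rank function is total by that induction. Second, the paragraph on independence of ordering is unnecessary: $\Phi$ is an ordered context and $\den{\Phi}$ is the corresponding left-associated product, so the tuple $(\den{\varphi(f_1)},\ldots,\den{\varphi(f_n)})$ is already determined with no choice to be made.
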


Note that for any trace term $c$ it always fully evaluates.  It requires no function context because 
it has no function symbols, and we have that for any value environment $\rho$, $c \Rightarrow v$ for 
some closed value term $v$.  

The goal is then to prove the following theorem by mutual induction: for any term $m$, 
any value environment $\rho$, and function environment $\varphi$, we have that if 
$m \rightsquigarrow c$ then 
\[
  \den{m} = \den{c} = \den{v}.  
\]

\section{Potential operational improvements}

In this section we describe additional properties our categorical semantics has that may lead to a more refined 
operational semantics.

The compatibility between differentiation and restriction: \textbf{[RD.8,9]} state essentially that the 
definedness of the reverse derivative of a term is completely determined by the term itself.  This is 
relevant to a more efficient semantics: the operational semantics used here has the property that when 
taking the reverse derivative over looping or recursive constructs, we first build a trace term, 
which turns out to be a (long) series of \texttt{let} expressions that describe the evolution of the 
state of the computation.  We then symbolically differentiate these \texttt{let} expressions which 
always results in the creation of a sum of two expressions for each such \text{let} expression -- and the 
number of \text{let} expressions created by recursion or looping is the number of times that the 
function recursed or the number of times the loop ran.  Thus we quickly get wide trees of sums 
of symbolic terms that need to be evaluated.  However, at each step of this process, one of these 
terms is of the form $v.\rd(x.m)(a)$ where $x$ does not occur freely in $m$, and hence can 
be proven to always evaluate to $0$ if it evaluates to anything.  Our semantics has the following property 

\begin{lemma}
  For any term $m$ in which $x$ does not occur 
  \[
    \den{v.\rd(x.m)(a)} = \rs{\<1,\<\den{a},\den{v}\>\>\den{m}}0
  \]
\end{lemma}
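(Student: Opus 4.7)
The plan is to use that the reverse derivative of a term which does not genuinely depend on $x$ collapses to a zero map up to a restriction idempotent. The tools are the chain rule {\bf [RD.5]}, the projection axiom {\bf [RD.3]}, and the partiality axiom {\bf [RD.8]}.

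First I would prove a weakening lemma by induction on $m$: if $x$ does not occur in $m$, then
\[
\den{\Gamma, x{:}T \proves m{:}U} = \pi_0 \cdot g,
\]
where $g := \den{\Gamma \proves m{:}U}$ and $\pi_0 : \den{\Gamma} \x \den{T} \to \den{\Gamma}$ is the projection. The cases for variables, tuples, projections, operations, and $\texttt{let}$ are immediate. For the control-structure cases (if-then-else and while) one uses that precomposition with $\pi_0$ preserves restrictions and joins; for the recursive case, one uses that fixed points are preserved, which is available via Proposition \ref{proposition:join-implies-dcppo} and Proposition \ref{proposition:recursive-defn-well-defined}.

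Given this factorisation, I would compute $R[\pi_0 g]$ using {\bf [RD.5]} and {\bf [RD.3]}. The chain rule gives
\[
R[\pi_0 g] = \<\pi_0, \<\pi_0\pi_0, \pi_1\> R[g]\>R[\pi_0],
\]
and since $R[\pi_0] = \pi_1 \iota_0$ by {\bf [RD.3]}, a short calculation with pairings collapses this to $R[\pi_0 g] = \<\pi_0\pi_0, \pi_1\> R[g]\,\iota_0$. Postcomposing with $\pi_1$ and using $\iota_0 \pi_1 = 0$ yields $R[\pi_0 g]\,\pi_1 = \<\pi_0\pi_0, \pi_1\> R[g]\cdot 0$. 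Precomposing with $\<\<1, \den{a}\>, \den{v}\>$ and simplifying the projection pairing gives
\[
\den{v.\rd(x.m)(a)} = \<1, \den{v}\> R[g]\cdot 0.
\]

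The last step is to rewrite this in the claimed form. Because $0$ is total, we have the standard restriction identity $h \cdot 0 = \rs{h}\cdot 0$, and axiom {\bf [RD.8]} gives $\rs{R[g]} = \rs{g}\x 1$; combining these with $\rs{fg} = \rs{f\rs{g}}$ and $\rs{\<f,h\>} = \rs{f}\rs{h}$ yields $\den{v.\rd(x.m)(a)} = \rs{\<\rs{g}, \den{v}\>}\cdot 0$, which is the claimed form $\rs{\<1, \<\den{a}, \den{v}\>\>\den{m}}\cdot 0$ once the weakening is unfolded. The main obstacle is the weakening lemma itself in the iteration and recursion cases, but the requisite join- and fixed-point-preservation facts are already established in the paper, so this step is essentially routine.
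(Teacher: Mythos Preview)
The paper states this lemma without proof, so there is no argument to compare against directly.  Your approach---a weakening lemma followed by the chain rule \textbf{[RD.5]}, the projection rule \textbf{[RD.3]}, and the restriction rule \textbf{[RD.8]}---is the natural one and is essentially correct.

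One small slip to fix: in the precomposition step you write $\<\<1,\den{a}\>,\den{v}\>\<\pi_0\pi_0,\pi_1\> = \<1,\den{v}\>$, but in a restriction category $\<f,g\>\pi_0 = \rs{g}f$, so $\<1,\den{a}\>\pi_0 = \rs{\den{a}}$ rather than $1$.  Carrying this through gives
\[
  \den{v.\rd(x.m)(a)} \;=\; \rs{\den{a}}\,\<1,\den{v}\>\,R[g]\cdot 0,
\]
and after applying $h\cdot 0 = \rs{h}\,0$ and \textbf{[RD.8]} you obtain $\rs{\den{a}}\,\rs{g}\,\rs{\den{v}}\cdot 0$.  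This is exactly what is needed: unfolding $\den{m}$ in the target as the weakened interpretation $\pi_0 g$ (with domain $\den{\Gamma}\x(\den{T}\x\den{U})$) gives $\rs{\<1,\<\den{a},\den{v}\>\>\den{m}} = \rs{\den{a}}\,\rs{\den{v}}\,\rs{g}$, which matches.  Without tracking $\rs{\den{a}}$ your final expression $\rs{\<\rs{g},\den{v}\>}\cdot 0$ is missing this factor and does not yet equal the stated right-hand side.

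The identity $h\cdot 0 = \rs{h}\,0$ that you use is indeed a basic fact about Cartesian left additive restriction categories (it follows because $0$ factors through the restriction terminal object), but it is worth citing or justifying in one line since the paper only records $0f\leq 0$ explicitly.
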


And moreover, the let expressions that get zeroed out have all their subterms occuring in the term 
that does not get zeroed out.  We then have the following lemma
 \begin{lemma}
  If we added the rule 
  \[
    x \not \in \fv(e) \Rightarrow w.\Rd(x.\texttt{let}\, y=d\, \texttt{in}\, e)(a) 
    := \texttt{let}\,x=a,y=d,t=w.\rd(y.e)(y) \, \texttt{in}\, t.\rd(x.d)(a)
  \]
  Then Propositions \ref{proposition:symb-correct} and \ref{proposition:symb-correct-extended} would 
  still hold.
 \end{lemma}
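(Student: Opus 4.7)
Both Propositions~\ref{proposition:symb-correct} and~\ref{proposition:symb-correct-extended} are proved by induction on the structure of the trace term $m$, with the extended version additionally parametrised by $\phi \in \den{\Phi}$. Every inductive case other than the $\Rd$-case for a let-expression is unaffected by the proposed modification, so the plan is to establish that, when $x \notin \fv(e)$, the denotation of the new right-hand side for $w.\Rd(x.\texttt{let}\, y = d \, \texttt{in} \, e)(a)$ still agrees with $\den{w.\rd(x.\texttt{let}\, y = d \, \texttt{in} \, e)(a)}$. Since the old rule already satisfies this by the existing correctness argument, it suffices to show that the old and new right-hand sides have the same denotation.

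First I would unfold the denotations of both sides using the semantics of let-expressions, addition, and the reverse derivative. The old right-hand side interprets as a sum $L_1 + L_2$, where $L_1 = \den{\texttt{let}\, x=a, y=d \, \texttt{in} \, w.\rd(x.e)(a)}$ and $L_2$ is the denotation of the second summand. The new right-hand side interprets as a single nested let-expression that, after straightforward rearrangement, agrees on the nose with $L_2$: the extra outer bindings $x = a$ and $y = d$ pre-compose with exactly the same data already required by $L_2$, and by {\bf [RD.8]} the term $w.\rd(y.e)(y)$ inherits its partiality from $\den{e}$ alone. Thus the crux is the equality $L_1 + L_2 = L_2$.

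For this, I would invoke the preceding lemma: since $x \notin \fv(e)$, the denotation of $w.\rd(x.e)(a)$ has the form $\rs{h}\,0$ for an idempotent $h$ built from $\den{e}$, $\den{a}$, and $\den{w}$. Consequently $L_1 = \rs{h'}\,0$, where $h'$ absorbs the outer let-bindings. The decisive step is to verify the restriction-containment $\rs{h'} \leq \rs{L_2}$: both $L_1$ and $L_2$ require the joint definedness of $\den{d}$ at $\den{a}$ and of $\den{e}$ at the shared evaluation point, and by {\bf [RD.8]} the reverse derivative in $L_2$ contributes no further partiality beyond that of $\den{e}$ itself. With this containment in hand, the Cartesian left additive restriction structure together with {\bf [RD.2]} yields
\[
L_1 + L_2 \;=\; \rs{h'}\,0 + L_2 \;=\; \rs{h'}L_2 + L_2 \;=\; L_2,
\]
completing the inductive step. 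The main obstacle is the restriction-containment: one must carefully unfold the nested let-expressions and track how idempotents propagate through pairing, composition, and reverse differentiation. This bookkeeping is routine but requires patience. The extended proposition is handled identically, with the environment $\phi$ carried through passively, since recursive definitions do not interact with this particular inductive case.
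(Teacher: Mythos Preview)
Your overall strategy is the right one and matches the paper's indicated route: use the preceding lemma to see that the dropped summand $L_1$ is a restricted zero, then argue via the restriction structure that $L_1 + L_2 = L_2$. The paper's remark that ``the let expressions that get zeroed out have all their subterms occurring in the term that does not get zeroed out'' is exactly the hint you are following.

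However, the final algebraic step is broken in two places. First, the containment you need is the \emph{reverse} of what you state: you must show $\rs{L_2} \leq \rs{h'}$, not $\rs{h'} \leq \rs{L_2}$. Your own analysis of the partiality (both summands require exactly the definedness of $\den{a}$, $\den{d}$, $\den{e}$, $\den{w}$, and by {\bf [RD.8]} the reverse derivatives add nothing further) actually gives $\rs{L_1} = \rs{L_2}$, so the needed direction holds; but the argument that follows only works from $\rs{L_2} \leq \rs{h'}$. Second, your displayed chain
\[
\rs{h'}\,0 + L_2 \;=\; \rs{h'}L_2 + L_2 \;=\; L_2
\]
is not valid: the first equality would force $\rs{h'}L_2 = \rs{h'}\,0$, and the second would give $2L_2$ rather than $L_2$. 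The correct computation, using $\rs{L_2} \leq \rs{h'}$ so that $\rs{h'}L_2 = L_2$, is
\[
\rs{h'}\,0 + L_2 \;=\; \rs{h'}\,0 + \rs{h'}L_2 \;=\; \rs{h'}(0 + L_2) \;=\; \rs{h'}L_2 \;=\; L_2.
\]
With this fix the inductive step goes through and the rest of your outline is fine.
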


This gives an operational semantics where differentiating over looping constructs does not 
 have a branching blowup, and hence experiences an exponential speedup.

 Reverse differential restriction categories, as we have seen earlier,
 allow forming a forward derivative from the reverse derivative.  They also allow 
 forming a reverse derivative from that forward derivative.  In a reverse differential restriction 
 category,  {\bf [RD.6]} is equivalent to the requirement that the process of going from a 
 reverse derivative to a forward derivative and then back to a reverse derivative gives 
 exactly the starting reverse derivative.

 \begin{lemma}
  For any map from $A \x B \to^{f} C$ define a map $A \x C\to ^{f^{\dagger[A]} := (\iota_0 \x 1)R[f]\pi_1} B$.
  We always get a forward derivative as $D[f] := R[f]^{\dagger[A]}$.  Then {\bf [RD.6]} is equivalent to 
  requiring that $D[f]^{\dagger[A]} = R[f]$.
 \end{lemma}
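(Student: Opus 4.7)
The plan is to show directly that $D[f]^{\dagger[A]}$ unfolds, via the reverse chain rule, into the left-hand side of \textbf{[RD.6]}, while $R[f]$ appears as the right-hand side, so that the two assertions coincide.

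First, I unfold both daggers. By the definition of the contextual dagger,
\[
D[f]^{\dagger[A]} = (\iota_0 \x 1)\, R[D[f]]\, \pi_1,
\]
and by hypothesis $D[f] := R[f]^{\dagger[A]} = (\iota_0 \x 1)\, R[R[f]]\, \pi_1$. Substituting gives
\[
D[f]^{\dagger[A]} = (\iota_0 \x 1)\, R\bigl[(\iota_0 \x 1)\, R[R[f]]\, \pi_1\bigr]\, \pi_1,
\]
so the equation $D[f]^{\dagger[A]} = R[f]$ is purely an identity relating $R[f]$ to the triple reverse derivative $R[R[R[f]]]$ via canonical linear combinators.

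Second, I expand the inner $R[-]$ by applying the reverse chain rule \textbf{[RD.5]} twice, which separates off the reverse derivatives of the combinators $\iota_0 \x 1$ and $\pi_1$. Both of these are built from linear pieces, so their reverse derivatives are given explicitly by \textbf{[RD.3]} ($R[\pi_j] = \pi_1 \iota_j$) and \textbf{[RD.4]} ($R[\<f,g\>] = (1\x\pi_0)R[f] + (1\x\pi_1)R[g]$), while the zero-case of \textbf{[RD.2]} causes the unwanted terms from $R[\iota_0 \x 1]$ to collapse. The surviving expression involves only a single instance of $R[R[R[f]]]$, sandwiched between the canonical combinators $\iota_0 \x 1$ and $\pi_1$, with an additional precomposition that rearranges the inputs in the shape $\<1 \x \pi_0, 0 \x \pi_1\>$. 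That is precisely the left-hand side of \textbf{[RD.6]}.

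Finally, comparing with $R[f]$ on the right, the equation $D[f]^{\dagger[A]} = R[f]$ becomes $\<1\x\pi_0,0\x \pi_1\>(\iota_0\x 1)R[R[R[f]]]\pi_1 = (1\x \pi_1)R[f]$, which is exactly \textbf{[RD.6]}; running the derivation backwards establishes the converse. The main obstacle is purely bookkeeping: keeping track of how the successive applications of \textbf{[RD.5]} introduce the various $\iota_j$ and $\pi_j$, and matching the resulting combinators precisely with those in the statement of \textbf{[RD.6]}. No genuinely new calculation is needed beyond the equational axioms \textbf{[RD.1]}--\textbf{[RD.5]}, since the content of the lemma is that \textbf{[RD.6]} is exactly the chain-rule unwinding of the involutivity of the contextual dagger on the reverse derivative.
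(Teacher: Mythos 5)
Your strategy --- unfold both daggers, expand $R[(\iota_0\x 1)R[R[f]]\pi_1]$ with \textbf{[RD.5]}, and dispose of the structural maps with \textbf{[RD.1--4]} --- is the right computation to run, but it does not produce what you claim, and this breaks your converse direction. Carrying the expansion out carefully (with $f \colon A \to B$, so $R[R[R[f]]] \colon ((A\x B)\x A)\x(A\x B) \to (A\x B)\x A$), one obtains
\[
D[f]^{\dagger[A]} \;=\; (1\x \iota_1)\,\<1\x\pi_0,\,0\x\pi_1\>(\iota_0\x 1)\,R[R[R[f]]]\,\pi_1,
\]
where $\iota_1 \colon B \to B\x B$ is $\<0,1\>$: each application of the dagger inserts $\iota_0 = \<1,0\>$ into the \emph{base point}, so the $B$-component of the base point of $R[R[R[f]]]$ is forced to be $0$. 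Since also $R[f] = (1\x\iota_1)(1\x\pi_1)R[f]$ (as $\iota_1\pi_1 = 1$), the involution equation $D[f]^{\dagger[A]} = R[f]$ is exactly \textbf{[RD.6]} \emph{precomposed with} $1\x\iota_1$, i.e., only the special instance of \textbf{[RD.6]} in which the first of the two $B$-arguments is set to zero. Hence \textbf{[RD.6]} $\Rightarrow$ involution is immediate (precompose both sides), but ``running the derivation backwards'' does not yield involution $\Rightarrow$ \textbf{[RD.6]}: you would additionally have to prove that $\<1\x\pi_0,0\x\pi_1\>(\iota_0\x 1)R[R[R[f]]]\pi_1$ is insensitive to the argument you zeroed out, and that independence is genuine mathematical content of \textbf{[RD.6]} (it is the ``linear in context'' behaviour of the second derivative), not a bookkeeping consequence of \textbf{[RD.1--5]}. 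Your closing claim that nothing beyond the equational axioms \textbf{[RD.1--5]} is needed is therefore false for this direction; indeed, if \textbf{[RD.6]} followed from \textbf{[RD.1--5]} by pure combinator manipulation it would not be an independent axiom.

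A correct proof of the converse must use the ambient structure (the lemma is asserted inside a reverse differential restriction category, so \textbf{[RD.1--5]}, \textbf{[RD.7--9]} are available): one needs the linearity-in-context analysis relating $R$ to the contextual dagger on the fibration of linear maps, as in the correspondence between RDCs and CDCs with a contextual linear dagger (Theorem 42 of \cite{arxiv:RDC}), to promote the zeroed-slot instance to the full axiom. Note that the paper itself states this lemma without proof, importing the equivalence from that correspondence, so there is no in-paper argument to match yours against; but as written your proposal proves only one implication. A minor further point: in the restriction setting the zero case of \textbf{[RD.2]} produces restriction idempotents ($\<a,0\>R[f] = \rs{af}\,0$), so your expansion should track these; here they are harmless because the structural maps $\iota_0$, $\iota_1$, $\pi_j$ and their pairings are total, but the proposal should say so rather than silently work in the total fragment.
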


 This kind of coherence for defining forward derivatives from their reverse could be useful in 
 using the forward derivative and then converting back via daggering the result.  

 \begin{lemma}
  For any operational symbol $\op$ if the evaluation function used by the operational semantics satisfies 
  \[
    \mathsf{eval}(\snd(\op_{RRR},(((a,0),0),(0,b)))) = \mathsf{eval}(\op_R,(a,b))
  \]
  then this can be modelled in any reverse differential restriction category.  Moreover, for every term $m$ 
  we have 
  \[
   \den{\rd(x.\rd(y.\rd(z.m)(a).y)(b).x)(c).w} = \rs{\den{b}}\rs{\den{c}}\den{\rd(z.m)(a).w}
  \]
 \end{lemma}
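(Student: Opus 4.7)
The plan is to derive both conclusions from axiom \textbf{[RD.6]}, together with the restriction compatibility axioms \textbf{[RD.8]} and \textbf{[RD.9]}. First, for the soundness of the operational rule $\mathsf{eval}(\snd(\op_{RRR},(((a,0),0),(0,b)))) = \mathsf{eval}(\op_R,(a,b))$, I would interpret each piece in the categorical model. Under a differentially denotational interpretation structure, $\den{\op_{RRR}} = R[R[R[\den{\op}]]]$, the zero-padded point $(((a,0),0),(0,b))$ is literally built by the injections appearing in the LHS of \textbf{[RD.6]} (namely $(\iota_0 \x 1)$ applied after $\<1 \x \pi_0, 0 \x \pi_1\>$), and $\snd$ corresponds to the final $\pi_1$. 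Hence the semantic content of the LHS of the evaluation rule is precisely the LHS of \textbf{[RD.6]} applied at $(\den{a},\den{b})$, and the semantic content of the RHS is $(1 \x \pi_1)R[\den{\op}]$ applied at the same input; these are equal by \textbf{[RD.6]}.

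Next, for the term-level identity $\den{\rd(x.\rd(y.\rd(z.m)(a).y)(b).x)(c).w} = \rs{\den{b}}\rs{\den{c}}\den{\rd(z.m)(a).w}$, I would unfold the interpretation of each $\rd$-layer using $\den{v.\rd(x.m)(a)} = \<\<1,\den{a}\>,\den{v}\>R[\den{m}]\pi_1$. Writing $f = \den{m}$, three iterations of this unfolding produce, at the core, a map of the shape $(\text{tuple built from }\den{a},\den{b},\den{c},\den{w})\,R[R[R[f]]]\pi_1$. After routine rearrangement (using the fact that the bound variables $y$ and $x$ are in direction position at the two inner layers, so they contribute $\iota_0$-like injections while the points $\den{b}$ and $\den{c}$ contribute $0$-insertions), the input tuple aligns exactly with the LHS pattern $\<1 \x \pi_0, 0 \x \pi_1\>(\iota_0 \x 1)$ of \textbf{[RD.6]}. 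Invoking that axiom collapses $R[R[R[f]]]\pi_1$ to $(1 \x \pi_1)R[f]$, which leaves $R[f]$ evaluated at $(\den{a},\den{w})$, matching the main factor on the RHS.

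Finally, I would account for the restriction idempotents. The partial points $\den{b}$ and $\den{c}$ enter the LHS through tupling, and by \textbf{[RD.8]} ($\rs{R[f]} = \rs{f} \x 1$) their partiality cannot be absorbed into $R[f]$ itself; instead it pulls out cleanly as $\rs{\den{b}}$ and $\rs{\den{c}}$ in front of the surviving term. \textbf{[RD.9]} is invoked, if needed, to carry restriction idempotents past intervening reverse-derivative computations during the bookkeeping. The main obstacle is this bookkeeping: matching the specific pattern of zero-insertions and projections produced by three nested tuplings $\<\<1,\den{a}\>,\ldots\>$ to the exact shape demanded by \textbf{[RD.6]}. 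Each manipulation is a straightforward calculation in a Cartesian left additive restriction category, but correctly tracking which tupled argument plays the role of point, direction, or padding zero at each of the three layers is the delicate part of the argument.
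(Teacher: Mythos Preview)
Your proposal is correct and matches the paper's approach: the paper does not spell out a proof for this lemma but only remarks that \textbf{[RD.6]} is crucial, and your plan supplies exactly the expected details---unfolding the three nested $\rd$'s to obtain the $R[R[R[f]]]\pi_1$ pattern of \textbf{[RD.6]}, collapsing via that axiom, and then using \textbf{[RD.8]}/\textbf{[RD.9]} to extract the surviving restriction idempotents $\rs{\den{b}}\,\rs{\den{c}}$. The only caution is the one you already flag: the bookkeeping of which argument is point versus direction at each layer must be carried out carefully, but the strategy is sound.
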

 Crucially for the above, we require \textbf{[RD.6]}.  

 An aspect of forward differentiation that is modelled in our semantics is that differentiating a differential with respect to its ``direction'' 
 is just substitution.  That is $\fd(x.  \fd(y.m)(a).x  )(b).v = \fd(y.m)(a).v$ is modelled.  This uses \textbf{[RD.6]}.  More generally, we can 
 modify the type system slightly to keep track of the arguments that a term is differentiated to by introducing another context, which we call 
 a linearity context.  Then the typing judgment for the reverse differential term would have two forms:
 \[
   \infer{\Gamma,a:A|\Delta,v:B \proves \rd{x.m}(a).v : A \qquad a,v \text{ fresh}}{\Gamma,x:A|\Delta \proves m:B}
   \qquad 
   \infer{\Gamma,a:A|\Delta,v:B \proves \rd{x.m}(a).v : A \qquad a,v \text{ fresh}}{\Gamma|\Delta,x:A \proves m:B}
 \]
 And if we are forward differentiating with respect to a variable from the linearity context: i.e., if 
 we form the forward derivative of a term with respect to a variable from the linearity context; i.e., 
 if $v$ was in the linearity context of a term $m$ and we form $\fd(v.m)(a).w)$, then operational reduction
 \[
     \fd(v.m)(a).w \rightsquigarrow \texttt{let}\, v = w \, \texttt{in}\, m 
 \]
 is modelled in our semantics.  This means that we can completely avoid doing differentiation in 
 some cases, at the cost of having to carry around more type information.  There is a similar version 
 of this rule for reverse derivatives and it has to do with ``colet'' expressions.  In $\sdpl$ 
 we can use the reverse derivative to create a term that substitutes linearly into the 
 output variable of a term.  We could use these ``colet'' expressions 
 and allow for speedups of reverse derivatives as well.  It might also be interesting 
 to characterize these constructions in their own right.  This approach also allows us to  force
  \textbf{[RD.6]} into the operational semantics. 
 
 The axiom \textbf{[RD.7]}, dealing with the symmetry of mixed partial derivatives, 
 may also have a role to play in simplifying the operational semantics.  
 Some machine-learning algorithms use the Hessian of the error function to optimize backpropagation itself, 
 allowing for both more efficient and effective training (for one example, see 
 \cite{proceedings:neural_net_hessian,thesis:visual-dp-thesis-of-the-year-2020}).  
These second derivatives are expected to satisfy a 
 higher dimensional analog of the chain rule.  In fact one might expect in general higher analogs of the 
 chain rule to hold, which are sometimes called the Faa di Bruno formulae for higher chain rules on 
 terms of the form $\partial^n (fg)$.  These expected formulae will all hold in our semantics due to a
 result that shows \textbf{[CD.6,7]} are equivalent to having all the Faa di Bruno formulae \cite{journal:Cockett-Seely:Faa}.  These 
 higher chain rule expansions can be used to determine a slightly different operational semantics for 
 $\rd(x.m)(a).v$ expressions, where the chain rule is maximally expanded first, and linearity reductions occur, 
 and then symbolic differentiation is used.  While it is unclear if this is more efficient, it would make things simpler, as it would
 guarantee that the operational semantics captured the higher chain rule formulae without having to 
 make a requirement of the evaluation function on $\op_{RRR}$.

\bibliography{references}

\end{document}